\newtheorem{thm}{Theorem}[section]
\newtheorem{lem}[thm]{Lemma}
\newtheorem{prop}[thm]{Proposition}
\theoremstyle{remark}
\newtheorem{rem}[thm]{Remark}
\numberwithin{equation}{section}
\newcommand{\al}{\alpha}
\def\({\Bigl(}
\def \){ \Bigr)}
 \def\RR{{\mathbb R}}
        \def\supp{\operatorname{supp}}
\begin{document}
\def\RR{\mathbb{R}}
\def\Exp{\text{Exp}}
\def\FF{\mathcal{F}_\al}

\title[] {Imaginary powers of $(k,1)$-generalized harmonic oscillator}

\author[]{Wentao Teng}

\address{School of Science and Technology, Kwansei Gakuin University, Japan.}

\email{ wentaoteng6@sina.com}

%\date{\today}
\keywords { imaginary powers; $(k,1)$-generalized harmonic
oscillator; H\"ormander type condition.} \subjclass[2000]{ 
43A85, 31B10, 42B20, 53C23.}

\begin{abstract} In this paper we will define and investigate the imaginary powers $\left(-\triangle_{k,1}\right)^{-i\sigma},\sigma\in\mathbb{R}$ of the $(k,1)$-generalized harmonic oscillator $-\triangle_{k,1}=-\left\|x\right\|\triangle_k+\left\|x\right\|$ and prove the $L^p$-boundedness $(1<p<\infty)$ and weak $L^1$-boundedness of such operators. It is a parallel result to the $L^p$-boundedness $(1<p<\infty)$ and weak $L^1$-boundedness of the imaginary powers of the Dunkl harmonic oscillator $-\triangle_k+\left\|x\right\|^2$.
To prove this result, we develop the Calder\'on--Zygmund theory
adapted to the $(k,1)$-generalized setting by constructing the
metric space of homogeneous type corresponding to the
$(k,1)$-generalized setting, and show that
$\left(-\triangle_{k,1}\right)^{-i\sigma}$ are singular integral
operators satisfying the corresponding H\"ormander type condition.

\end{abstract}

\maketitle
\input amssym.def

\section{Introduction}
Dunkl theory is a far-reaching generalization of classical Fourier analysis related to root system initiated by Dunkl \cite{Du1}. In the past twenty years, there have been many studies on Dunkl theory, including maximal functions, Bochner--Riesz means, multipliers, Riesz transforms and Calder\'on--Zygmund theory.
In \cite{NS1}, A. Nowak and K. Stempak studied the Riesz
transforms related to the Dunkl harmonic oscillator $L_k$ and
proved when the finite reflection group $G$ is isomorphic to
$\mathbb{Z}_2^N$ such operators are $L^p$-bounded $(1<p<\infty)$
and weakly $L^1$-bounded. They then continued the study of the
Dunkl harmonic oscillator $L_k$ in \cite{NS2} by considering its
imaginary powers $L_k^{-i\sigma},\;\sigma\in\mathbb{R}$ and proved
the $L^p$-boundedness $(1<p<\infty)$ and weak $L^1$-boundedness of
such operators when $G$ is isomorphic to $\mathbb{Z}_2^N$. It is a
generalization of the result obtained by Stempak and Torrea
\cite{ST} on the imaginary powers of classical harmonic oscillator
corresponding to the case when the multiplicity function
$k\equiv0$. In \cite{A}, B. Amri extended the result in \cite{NS1}
to general finite reflection groups $G$ in arbitrary dimensions.
And the result in \cite{NS2} can also be extended to general
finite reflection groups using the same techniques as in \cite{A}
with slight modifications. The principal tool of the above results
is the adaptation of the classical Calder\'on--Zygmund theory to
Dunkl setting due to Amri and Sifi \cite{AS}. 

The framework of
Dunkl theory is as follows: Given a root system $R$ in the
Euclidean space $\mathbb R^N$, denote by $\sigma_\alpha$ the
reflection in the hyperplane orthogonal to $\alpha$ and $G$ the
finite subgroup of $O(N)$ generated by the reflections
$\sigma_\alpha$ associated to the root system. Define a
multiplicity function $k:R\rightarrow \mathbb C$ such that $k$ is
$G$-invariant, that is, $k\left(\alpha\right)=k\left(\beta\right)$
if $\sigma_\alpha$ and $\sigma_\beta$ are conjugate. The
$Dunkl\;operators\;T_j$, $1\leq j\leq N$, which were introduced in
\cite{Du1}, are defined by the following deformations by
difference operators of directional derivatives $\partial_j$:
\begin{align*}T_j f(x)=\partial_j f(x)+\sum_{\alpha\in
R^+}k(\alpha)\alpha_j\frac{f(x)-f
(\sigma_\alpha(x))}{\left\langle\alpha,\;x\right\rangle},\end{align*}
where $\left\langle\cdot,\cdot\right\rangle$ denotes the standard
Euclidean inner product and $R^+$ is any fixed positive subsystem
of $R$. They commute pairwise and are skew-symmetric with respect
to the $G$-invariant measure $dm_k(x)=h_k(x)dx$, where
$h_k(x)=\prod_{\alpha\in
R}\vert\left\langle\alpha,\;x\right\rangle\vert^{k(\alpha)}$. The
Dunkl harmonic oscillator is defined as
$L_k=-\triangle_k+\left\|x\right\|^2$, where $\triangle_k$ denotes
the Dunkl Laplacian $\triangle_k={\textstyle\sum_{j=1}^N}T_j^2$.
When $k\equiv 0$, the operator $L_k$ recedes to the classical
harmonic oscillator $-\triangle+\left\|x\right\|^2$, where
$\triangle$ stands for the classical Euclidean Laplacian. The eigenfunction of $\triangle_k$ for fixed $y$ is the integral kernel of the generalized Fourier transform called Dunkl transform. It takes the place of the exponential function $e^{-i\left\langle x,y\right\rangle}$ in classical Fourier transform. 

The operators $\partial_j$ and $T_j$ are intertwined by a
Laplace-type operator (see \cite{Du2})
\begin{align}V_kf(x)=\int_{\mathbb{R}^N}f(y)d\mu_x(y)\end{align}
associated to a family of probability measures
$\left\{\mu_x\vert\;x\in\mathbb{R}^N\right\}$ with compact support
(see \cite{R2}), that is, $T_j\circ V_k=V_k\circ \partial_j$.
Specifically, the support of $\mu_x$ is contained in the convex
hull $co(G\cdot x)$, where $G\cdot x=\left\{g\cdot x\vert\;g\in
G\right\}$ is the orbit of $x$. For any Borel set $B$ and any
$r>0$, $g\in G$, the probability measures satisfy
$$\mu_{rx}\left(B\right)=\mu_x\left(r^{-1}B\right),\;\mu_{gx}\left(B\right)=\mu_x\left(g^{-1}B\right).$$
The intertwining operator $V_k$ is one of the most important operators in Dunkl theory.

More recently, S. Ben Sa\"id, T. Kobayashi and B. \O rsted \cite{BSK2} gave a
further far-reaching generalization of Dunkl theory by introducing
a parameter $a>0$ arisen from the “interpolation” of the two
$sl(2,\mathbb R)$ actions on the Weil representation of the
metaplectic group $Mp(N,\mathbb R)$ and the minimal unitary
representation of the conformal group $O(N+1,2)$. They defined the
$a$-deformed Dunkl harmonic oscillator as
$\triangle_{k,a}:=\left\|x\right\|^{2-a}\triangle_k-\left\|x\right\|^a$.
The operator is an essentially self-adjoint operator on
${{L}^{2}}\left( {{\mathbb{R}}^{N}},{\vartheta_{k,a}}\left( x
\right)dx \right)$ with only negative discrete spectrum, where
${\vartheta_{k,a}}\left( x \right)={{\left\| x
\right\|}^{a-2}}h_k(x)$. They then proved the existence of a
$(k,a)$-generalized holomorphic semigroup  ${\mathcal
I}_{k,a}\left(z\right):=\exp \left(\frac
za\triangle_{k,a}\right),\;\Re z\geq0$ with infinitesimal
generator $\frac1a\triangle_{k,a}$. This holomorphic semigroup
recedes to the Hermite semigroup studied by Howe \cite{H} when
$k\equiv0$ and $a=2$; to the Laguerre semigroup studied by
Kobayashi and Mano \cite{KM1,KM2} when $k\equiv0$ and $a=1$; to
the Dunkl Hermite semigroup studied by R\"osler \cite{R1} when
$k\geq 0$, $a=2$ and $z=2t,\;t>0$. The $(k,a)$-generalized Fourier
transform is then defined as the  boundary value $z=\frac{\pi i}2$
of the semigroup, i.e., $F_{k,a}=e^{i\pi(\frac{2\left\langle
k\right\rangle+N+a-2}{2a})}{\mathcal I}_{k,a}\left(\frac{\pi
i}2\right),$ where $\left\langle k\right\rangle:=\sum_{\alpha\in
R^+}k(\alpha)$.

The $(k,a)$-generalized Fourier
transform defined on ${{L}^{2}}\left( {{\mathbb{R}}^{N}},{\vartheta_{k,a}}\left( x
\right)dx \right)$ has its integral representation as (see \cite[(5.8)]{BSK2}) 
$$F_{k,a}f\left(\xi\right)=c_{k,a}\int_{\mathbb{R}^N}f\left(y\right)B_{k,a}\left(\xi,y\right)\vartheta_{k,a}\left(y\right)dy,\;\;\xi\in\mathbb{R}^N,$$
where $c_{k,a}$ is a constant and $B_{k,a}\left(x,y\right)$ is a symmetric kernel.
It recedes to the Dunkl transform when $a=2$ for $f\in{({L}^{1}\cap {L}^{2})}\left( {{\mathbb{R}}^{N}},{\vartheta_{k,a}}\left( x
\right)dx \right)$. 
In \cite[Theorem 5.11]{BSK2}, the authors showed that the integral kernel $B_{k,a}\left(x,y\right)$ satisfies the condition
\begin{align}\label{uniform}\left|B_{k,a}\left(x,y\right)\right|\leq\left|B_{k,a}\left(0,y\right)\right|\leq 1\end{align}
if $a=1$ or $2$ assuming that $2\left\langle k\right\rangle+N+a-3\geq0$. 
In this case one can define the $(k,a)$-generalized translation operator via an integral combining the inversion formula of the $(k,a)$-generalized Fourier transform for $a=\frac2n,\;n\in{\mathbb{N}}$ (see \cite[Theorem 5.3]{BSK2}). For the general case of $2\left\langle k\right\rangle+N+a-3\geq0$, the condition of boundedness \eqref{uniform} is not necessarily true and it remains an open problem whether it holds (see \cite [Section 6]{GI2}). In \cite{CD}, the authors proved such boundedness for $k\equiv0$ and $a=\frac2n,\;n\in{\mathbb{N}}$ only. 

In \cite{BD}, S. Ben Sa\"id and L. Deleaval
studied the particular case when $k>0$ and $a=1$. They defined the generalized translation operator via an integral in this setting and derived a positive radial formula of the $(k,1)$-generalized translation from the product of the integral kernel of the generalized Fourier transform. They then found many
parallel results to Dunkl's analysis (the case when $k\geq 0$ and
$a=2$) from such definition of the generalized translation operator. We continue the study of the $(k,1)$-generalized Fourier analysis by S. Ben Sa\"id and L. Deleaval \cite{BD} in this paper.

We will define and investigate the imaginary powers
$\left(-\triangle_{k,1}\right)^{-i\sigma},\sigma\in\mathbb{R}$ of
the $(k,1)$-generalized harmonic oscillator $-\triangle_{k,1}=-\left\|x\right\|\triangle_k+\left\|x\right\|$ and
prove a parallel result for general finite reflection groups $G$
to that in \cite{NS2}. According to the radial formula of
$(k,1)$-generalized translation operator given in \cite{BD}, we
develop the Calder\'on--Zygmund theory adapted to the
$(k,1)$-generalized setting by constructing the metric space of
homogeneous type corresponding to the $(k,1)$-generalized setting
and giving the corresponding H\"ormander type condition to prove
the $L^p$-boundedness $(1<p<\infty)$ and weak $L^1$-boundedness of
$\left(-\triangle_{k,1}\right)^{-i\sigma}$.

For a general metric space, a well-known definition of
differatiation by Cheeger \cite{Ch} is given via integration on
continuous rectifiable curves. Unfortunately, rectifiable curves
between two distinct points do not necessarily exist (or in other words, the induced length metric could be infinite) with respect
to the metric corresponding to $(k,1)$-generalized analysis and
derivatives on the metric space cannot be defined. We will make use of an estimate of
difference quotient analogue in substitute of estimate of
derivative.
In the $(k,1)$-generalized setting, it is reasonable
to consider the operator $\left\|x\right\|^{}\triangle_k$ as the
$(k,1)$-generalized Laplacian because the distribution kernel of
the $(k,1)$-generalized Fourier transform is the eigenfunction of
the operator $\left\|x\right\|^{}\triangle_k$ (see \cite[Theorem
5.7]{BSK2}). The imaginary powers of $(k,1)$-generalized harmonic oscillators motivates us to develop the Calder\'on--Zygmund theory in $(k,1)$-generalized setting.
 The development of $(k,1)$-generalized Fourier analysis is still at its infancy and there have only been \cite{BD} and \cite{BD2} on this field.

The paper is organized as follows. In Section 2 we recall some
results in $(k,1)$-generalized Fourier analysis and the
translation operator in this setting. In Section 3, we will study
the corresponding metric space of homogeneous type and develop the
Calder\'on--Zygmund theory adapted to the $(k,1)$-generalized
setting. In Section 4, we define and investigate the imaginary
powers $\left(-\triangle_{k,1}\right)^{-i\sigma}$ of the
$(k,1)$-generalized harmonic oscillator and state the main
theorem. In the last section we will show that such operators
satisfy the corresponding H\"ormander type condition given in
Section 3 to prove the main theorem. We assume $k>0$ in this paper
and most of the results will be under the condition $2\left\langle
k\right\rangle+N-2> 0$. Throughout the paper we denote $C$, $C_1$,
$C_2$  to be constants varying from line to line and $b$, $c$,
$b_1$, $b_2$ to be some positive absolute constants. The root
system we are concerned with is not necessarily crystallographic.

\section{Preliminaries}
The study of Dunkl theory originates from a generalization of
spherical harmonics with the Dunkl weight measure
$dm_k(x)=h_k(x)dx$, where $h_k(x)=\prod_{\alpha\in
R}\vert\left\langle\alpha,\;x\right\rangle\vert^{k(\alpha)}$,
playing the role of Lebesgue measure $dx$ in the classical theory
of spherical harmonics. Let $P_m$ be the space of homogeneous
polynomials on $\mathbb{R}^N$ of degree $m$. The so called Dunkl
Laplacian $\triangle_k$ was constructed in such a way that
$P_m\cap ker\triangle_k$ are orthogonal to each other for
$m=0,1,\cdots$ with respect to Dunkl weight measure $m_k$. It has
the following explicit expression,
$$\triangle_k f\left(x\right)=\triangle_{}f\left(x\right)+2\sum_{\alpha\in R^+}k\left(\alpha\right)\left(\frac{\left\langle\nabla_{}f,\alpha\right\rangle}{\left
\langle\alpha,x\right\rangle}-\frac{f\left(x\right)-f\left(\sigma_\alpha\left(x\right)\right)}{\left\langle\alpha,x\right\rangle^2}\right).$$
Denote $\mathcal H_k^m\left(\mathbb{R}^N\right):=P_m\cap
ker\triangle_k$ to be the space of $h$-harmonic polynomials of
degree $m$. Then the elements in the restriction $\mathcal
H_k^m\left(\mathbb{R}^N\right)\vert_{\mathbb S^{N-1}}$ of
$\mathcal H_k^m\left(\mathbb{R}^N\right)$ to the unit sphere
$\mathbb S^{N-1}$ were called spherical $h$-harmonics. The spaces
$\mathcal H_k^m\left(\mathbb{R}^N\right)\vert_{\mathbb
S^{N-1}},\;m=0,1,\cdots$ are finite dimensional and there is the
spherical harmonics decomposition
\begin{equation}\label{decomposition}L^2\left(\mathbb S^{N-1},h_k\left(x'\right)d\sigma(x')\right)=\sum_{m\in \mathbb N}^\oplus\mathcal H_k^m\left(\mathbb{R}^N\right)\vert_{\mathbb S^{N-1}},\end{equation}
 where $d\sigma$ denotes the spherical measure. For each fixed $m\in\mathbb{N}$, denote by
$d(m)=\dim\left(\mathcal
H_k^m\left(\mathbb{R}^N\right)\vert_{\mathbb S^{N-1}}\right)$. Let
$\left\{Y_i^m:\;i=1,2,\cdots,d(m)\right\}$ be an orthonormal basis
of $\mathcal H_k^m\left(\mathbb{R}^N\right)\vert_{\mathbb
S^{N-1}}$. They are the eigenvectors of the generalized
Laplace--Beltrami operator ${\left.\triangle_k\right|}_{\mathbb
S^{N-1}}$.

From the spherical harmonic decomposition \eqref{decomposition} of
$L^2\left(\mathbb S^{N-1},h_k\left(x'\right)d\sigma(x')\right)$,
there is a unitary isomorphism (see \cite[(3.25)]{BSK2})
$$\sum_{m\in \mathbb N}^\oplus(\mathcal H_k^m\left(\mathbb{R}^N\right){\vert_{\mathbb S^{N-1}})\otimes L^2}\left({\mathbb{R}}_+,r^{\;2\left\langle k\right\rangle+N-2}dr\right)\xrightarrow\sim L^2\left(\mathbb{R}^N,\vartheta_{k,1}\left(x\right)dx\right),$$
where ${\vartheta_{k,1}}\left( x \right)={{\left\| x
\right\|}^{-1}}h_k(x)$. Let $\lambda_{k,m}:=2m+2\left\langle
k\right\rangle+N-2$ and define the Laguerre polynomials as
$$L_l^\mu(t):=\sum_{j=0}^l\frac{{(-1)}^j\Gamma(\mu+l+1)}{(l-j)!\Gamma(\mu+j+1)}\frac{t^j}{j!},\;\mathrm{Re}\mu>-1.$$
In \cite{BSK2} the authors constructed an orthonormal basis
$\left\{\left.\mathrm\Phi_{l,m,j}\right|
l\in\mathbb{N},\;m\in\mathbb{N},\;j=1,2,\cdots,d(m)\right\}$ of
${{L}^{2}}( {{\mathbb{R}}^{N}},$ ${\vartheta_{k,1}}\left( x
\right)dx )$, where
$${\mathrm\Phi}_{l,m,j}\left(x\right):=\left(\frac{2^{\lambda_{k,m}+1}\Gamma(l+1)}{\Gamma(\lambda_{k,m}+l+1)}\right)^{1/2}Y_j^m\left(x\right)L_l^{\lambda_{k,m}}\left(2\left\|x\right\|\right)\exp\left(-\left\|x\right\|\right).$$
They are eigenfunctions for the $(k,1)$-generalized harmonic
oscillator $-{{\Delta
}_{k,1}}=-\left\|x\right\|^{}\triangle_k+\left\|x\right\|$, i.e.,
\begin{equation}\label{Delta}-{{\Delta }_{k,1}}\Phi _{l,m,j}\left( x \right)=\left( 2l+{{\lambda }_{k,m}}+1 \right)\Phi _{l,m,j}\left( x \right).\end{equation}

The $(k,1)$-generalized Laguerre holomorphic semigroup
$e^{z\triangle_{k,1}}\;(\Re z\geq0)$ on ${{L}^{2}}\left(
{{\mathbb{R}}^{N}},{\vartheta_{k,1}}\left( x \right)dx \right)$
has its spectral decomposition (see \cite[(4.3)]{Te})

\begin{align}\label{expand}e^{z\triangle_{k,1}}(f)\left(x\right)=\sum_{l,m,j}e^{-z\left(2l+\lambda_{k,m}+1\right)}{\left\langle f,\Phi_{l,m,j}\right\rangle}_{k,1}\Phi_{l,m,j}\left(x\right),\;f\in {{L}^{2}}\left( {{\mathbb{R}}^{N}},{\vartheta_{k,1}}\left( x \right)dx \right),\end{align}
where ${\left\langle
f,g\right\rangle}_{k,1}=\int_{\mathbb{R}^N}f(x)g(x)\vartheta_{k,1}(x)dx$.
It is a Hilbert--Schmidt operator for $\Re z>0$ and a unitary
operator on $\Re z=0$ (see \cite[Theorem 3.39]{BSK2}). By Schwartz
kernel theorem, the operator $e^{z\triangle_{k,1}}\;(\Re z\geq0)$
has the following integral representation  (see
\cite[(4.56)]{BSK2})
\begin{align}\label{integral}e^{z\triangle_{k,1}}\left(f\right)\left(x\right)=c_{k,1}\int_{\mathbb{R}^N}f\left(y\right)\Lambda_{k,1}\left(x,y;z\right)\vartheta_{k,1}\left(y\right)dy,\end{align}
where
$c_{k,1}=\left(\int_{\mathbb{R}^N}\exp\left(-\left\|x\right\|\right)\vartheta_{k,1}\left(x\right)dx\right)^{-1}$
and
\begin{align}\label{LambdaV}\Lambda_{k,1}\left(r\omega,s\eta;z\right):=\left({\widetilde V}_kh_{k,1}\left(r,s;z;\cdot\right)\right)(\omega,\eta)\end{align}
for $x=r\omega$, $y=s\eta$, $r,s>0$ and $\omega,\eta\in \mathbb
S^{N-1}$. Here ${\widetilde V}_k$ is defined by $\left({\widetilde
V}_kh\right)\left(x,y\right):=\left(V_kh_y\right)(x),$ where
$h_y(\cdot):=h\left(\left\langle\cdot,y\right\rangle\right)$ for a
continuous function $h(t)$ of one variable. And
$h_{k,1}\left(r,s;z;w\right)$ has its closed formula
\begin{align}\label{eqn:4.20}
h_{k,1}(r,s;z;w)=\frac{\exp(-(r+s)\coth(z))}{\sinh(z)^{2\langle
k\rangle+N-1}}\displaystyle
   \Gamma\Bigl(\langle k\rangle+\frac{N-1}{2}\Bigr)
   \widetilde{I}_{\langle k\rangle+\frac{N-3}{2}}
   \Bigl( \frac{\sqrt{2}(rs)^{\frac{1}{2}}}{\sinh z}
          (1+w)^{\frac{1}{2}} \Bigr),
\end{align}
where $\widetilde{I}_{v}$ is the normalized $I$-Bessel function
and has the following integral formula (see, e.g., \cite[6.15
(2)]{Wa})
$${\widetilde I}_v(w)=\frac1{\sqrt{\mathrm\pi}\Gamma\left(\nu+\frac12\right)}\int_{-1}^1e^{wu}\left(1-u^2\right)^{v-{\textstyle\frac12}}du,\;\nu>-1/2,\;w\in\mathbb{C}.$$
The integral on the right hand side of \eqref{integral} converges
absolutely for all $f\in {{L}^{2}}\left(
{{\mathbb{R}}^{N}},{\vartheta_{k,1}}\left( x \right)dx \right)$ if
$\Re z>0$ and for all $f\in {({L}^{1}\cap{L}^{2})}\left(
{{\mathbb{R}}^{N}},{\vartheta_{k,1}}\left( x \right)dx \right)$ if
$\Re z=0$ (see \cite[Corollary 4.28]{BSK2}). From \eqref{LambdaV}
and \eqref{eqn:4.20} we get an expression of
$\Lambda_{k,1}\left(x,y;z\right)$ (a slight modification of
Proposition 5.10 in \cite{BSK2})
\begin{align}\label{eqn:4.21}
\Lambda_{k,1}\left(x,y;z\right)
=&\frac{\exp(-(\left\|x\right\|+\left\|y\right\|)\coth(z))}{\sinh(z)^{2\langle
k\rangle+N-1}}\displaystyle
   \Gamma\Bigl(\langle k\rangle+\frac{N-1}{2}\Bigr)\notag\\&
   \times V_k\left(\widetilde{I}_{\langle k\rangle+\frac{N-3}{2}}
   \Bigl( \frac{1}{\sinh z}
          \sqrt{2(\left\|x\right\|\left\|y\right\|+\left\langle x,\cdot\right\rangle)} \Bigr)\right)\left(y\right).
\end{align}

Let
$$B_{k,1}\left(x,y\right):=e^{i\frac\pi2(2\left\langle k\right\rangle+N-1)}\Lambda_{k,1}\left(x,y;i\frac{\mathrm\pi}2\right).$$
Then the $(k,1)$-generalized Fourier transform on ${{L}^{2}}\left(
{{\mathbb{R}}^{N}},{\vartheta_{k,1}}\left( x \right)dx \right)$
can be expressed as
$$F_{k,1}f\left(\xi\right)=c_{k,1}\int_{\mathbb{R}^N}f\left(y\right)B_{k,1}\left(\xi,y\right)\vartheta_{k,1}\left(y\right)dy,\;\;\xi\in\mathbb{R}^N$$
because $F_{k,1}:=e^{i\frac{\pi}{2}(2\left\langle
k\right\rangle+N-1)}{\mathcal I}_{k,1}\left(\frac{\pi i}2\right)$.
It has the property (see \cite[Theorem 5.3]{BSK2})
$$F_{k,1}^{-1}\left(f\right)=F_{k,1}\left(f\right).$$
The $(k,1)$-generalized translation $\tau_y$ is defined on
${{L}^{2}}\left( {{\mathbb{R}}^{N}},{\vartheta_{k,1}}\left( x
\right)dx \right)$ by (see \cite{BD})
$$F_{k,1}\left(\tau_yf\right)\left(\xi\right):=B_{k,1}\left(y,\xi\right)F_{k,1}\left(f\right)\left(\xi\right),\;\;\xi\in\mathbb{R}^N.$$
It is analogous to the translation operator
$\tau_yf\left(x\right)=f\left(x-y\right)$ in classical Fourier
analysis. The above definition makes sense as $F_{k,1}$ is an
isometry from ${{L}^{2}}\left(
{{\mathbb{R}}^{N}},{\vartheta_{k,1}}\left( x \right)dx \right)$
onto itself. Assume $\langle k\rangle+\frac{N-2}{2}>0$. Then
$\left|B_{k,1}\left(x,y\right)\right|\leq1$ (see
\cite[Theorem 5.11]{BSK2}) and so $\tau_y$ can also be defined as
$$\tau_yf\left(x\right)=c_{k,1}\int_{\mathbb{R}^N}B_{k,1}\left(x,\xi\right)B_{k,1}\left(y,\xi\right)F_{k,1}\left(f\right)\left(\xi\right)\vartheta_{k,1}\left(\xi\right)d\xi$$
for $f\in \mathcal L_k^1\left(\mathbb{R}^N\right)$, where
$\mathcal L_k^1\left(\mathbb{R}^N\right):=\left\{f\in
L^1\left(\mathbb{R}^N,\vartheta_{k,1}\left(x\right)dx\right):\;F_{k,1}\left(f\right)\in
L^1\left(\mathbb{R}^N,\vartheta_{k,1}\left(x\right)dx\right)\right\}.$
This formula holds true on Schwartz space $\mathcal S\left(\mathbb{R}^N\right)$ since $\mathcal S\left(\mathbb{R}^N\right)$ is a subspace of $\mathcal L_k^1\left(\mathbb{R}^N\right)$. The operator $\tau_y$ satisfies the following properties:\\
(1). For every $x,y\in \mathbb R^N$,
\begin{align}\label{prop1}\tau_yf\left(x\right)=\tau_xf\left(y\right),\;f\in\mathcal
S\left(\mathbb{R}^N\right).\end{align} (2). For every $y\in
\mathbb R^N$,
\begin{align}\label{prop2}\int_{\mathbb{R}^N}\tau_yf\left(x\right)g\left(x\right)\vartheta_{k,1}\left(x\right)dx=\int_{\mathbb{R}^N}f\left(x\right)\tau_yg\left(x\right)\vartheta_{k,1}\left(x\right)dx,\;\;f,g\in\mathcal S\left(\mathbb{R}^N\right).\end{align}
Here the property (1) corresponds to
$\tau_yf\left(x\right)=\tau_{-x}f\left(-y\right)$ and (2)
corresponds to the skew-symmetry in classical Fourier analysis and
Dunkl analysis.

For any radial function $f\in\mathcal S\left(\mathbb{R}^N\right)$,
i.e.,  $f(x)=f_0\left(\left\|x\right\|\right)$, $\langle
k\rangle+\frac{N-2}{2}>0$, $\tau_y$ can be expressed as follows
(see \cite{BD})
\begin{align}\label{radial}\tau_yf(x)=&\frac{\Gamma\left(\frac{N-1}2+\left\langle k\right\rangle\right)}{\sqrt\pi\Gamma\left(\frac{N-2}2+\left\langle k\right\rangle\right)}\times\notag\\&V_k\left(\int_{-1}^1f_0\left(\left\|x\right\|+\left\|y\right\|-\sqrt{2\left(\left\|x\right\|\left\|y\right\|+\left\langle\cdot,y\right\rangle\right)}u\right)\left(1-u^2\right)^{\frac N2+\left\langle k\right\rangle-2}du\right)\left(x\right).\end{align}
And so $\tau_y$ is positive on radial functions and can be
extended as a bounded operator to the space of all radial
functions on ${{L}^{p}}\left(
{{\mathbb{R}}^{N}},{\vartheta_{k,1}}\left( x \right)dx
\right),\;1\leq p\leq2$. Further, if $f$ is a nonnegative radial
function on ${{L}^{1}}\left(
{{\mathbb{R}}^{N}},{\vartheta_{k,1}}\left( x \right)dx \right)$,
then
\begin{align}\label{inttauye}\int_{\mathbb{R}^N}\tau_yf\left(x\right)\vartheta_{k,1}\left(x\right)dx=\int_{\mathbb{R}^N}f\left(x\right)\vartheta_{k,1}\left(x\right)dx.\end{align}
The authors in \cite{BD} also gave a special case of the formula
for radial functions
\begin{align}\label{tauye}
\tau_y\left(e^{-\lambda\left\|\cdot\right\|}\right)\left(x\right)
=\displaystyle
   \Gamma\Bigl(\langle k\rangle+\frac{N-1}{2}\Bigr)e^{-\lambda(\left\|x\right\|+\left\|y\right\|)}
   V_k\left(\widetilde{I}_{\langle k\rangle+\frac{N-3}{2}}
   \Bigl( \lambda
          \sqrt{2(\left\|x\right\|\left\|y\right\|+\left\langle x,\cdot\right\rangle)} \Bigr)\right)\left(y\right).
\end{align}

\section{H\"ormander type condition}
Let $(X,d)$ be a metric space. Denote $B(x,r)$ to be the ball
$B\left(x,r\right):=\left\{y\in X:d\left(x,y\right)\leq r\right\}$
for $x\in X$. If there exists a doubling measure $m$, i.e., there
exists a measure $m$ such that for some absolute constant $C$,
\begin{align}\label{doubling}m\left(B\left(x,2r\right)\right)\leq Cm\left(B\left(x,r\right)\right),\;\forall x\in\mathbb{R}^N,\;r>0,\end{align}
then $(X,d)$ is a space of homogeneous type. The
Calder\'on--Zygmund theory on a space of homogeneous type
$(X,d,m)$ says that for $f\in L^1(X,m)\cap L^2(X,m)$ and
$\lambda>\frac{{\left\|f\right\|}_1}{m\left(X\right)}$, there
exists the Calder\'on--Zygmund decomposition $f=h+b$ with
$b=\sum_j b_j$ and a sequence of balls $(B(y_j,r_j))_j$ =$(B_j)_j$
such that for some absolute constant $C$,

\begin{itemize}
  \item [(i)]$\left\|h \right\|_\infty\leq C \lambda$;
  \item [(ii)]$supp(b_j)\subset B_j$;
  \item [(iii)]$\displaystyle{\int_{B_j}b_j(x)dm(x)=0}$;
\item [(iv)]$\left\|b_j\right\|_{L^1\left(X,m\right)}\leq
C\;\lambda\, m(B_j)$; \item [(v)]$\displaystyle{\sum_j m(B_j)\leq
C\;\frac{\left\|f\right\|_{L^1\left(X,m\right)}}{\lambda}}$ .
\end{itemize}
From the Calder\'on--Zygmund decomposition one can deduce that for
a bounded operator $S$ on $L^2(X,m)$ associated with kernel
$K(x,y)$, if $K(x,y)$ satisfies a H\"ormander type condition, then
the operator $S$ can be extended to a bounded operator on
$L^p(X,m)$ $(1<p\leq 2)$ and a weakly bounded operator on
$L^1(X,m)$. We refer to \cite[Chapter III]{CG} for this theory.

Now we adapt Calder\'on--Zygmund theory to the $(k,1)$-generalized
setting by constructing the metric space corresponding to this setting first. For $x,y\in \mathbb
R^N$, define a function $d$ from $\mathbb{R}^N\times\mathbb{R}^N$ to $\mathbb R$ as
\begin{align*}d\left(x,y\right):&=\sqrt{\left\|x\right\|+\left\|y\right\|-\sqrt{2\left(\left\|x\right\|\left\|y\right\|+\left\langle x,y\right\rangle\right)}}\\&=\sqrt{\left\|x\right\|+\left\|y\right\|-2\sqrt{\left\|x\right\|\left\|y\right\|}\cos\frac\theta2}\geq\left|\sqrt{\left\|x\right\|}-\sqrt{\left\|y\right\|}\right|,\end{align*}
where $\theta=\arccos\frac{\left\langle
x,y\right\rangle}{\left\|x\right\|\left\|y\right\|}$,
$0\leq\theta\leq\pi$. We need to equip $\mathbb{R}^N$ with this function as the metric in $(k,1)$-generalized analysis in view of the expression \eqref{radial} of $(k,1)$-generalized translation operators.

\begin{prop}\label{d(x,y)}
The function $d\left(x,y\right)$ is a metric.
\end{prop}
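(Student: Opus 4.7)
The plan is to verify the three metric axioms. Symmetry and nonnegativity are immediate from the explicit formula. For $d(x,y)=0\Longleftrightarrow x=y$, I would combine AM--GM with $\cos(\theta/2)\leq 1$ to write $\left\|x\right\|+\left\|y\right\|\geq 2\sqrt{\left\|x\right\|\left\|y\right\|}\geq 2\sqrt{\left\|x\right\|\left\|y\right\|}\cos(\theta/2)$, so that $d(x,y)=0$ forces equality throughout, giving $\left\|x\right\|=\left\|y\right\|$ and $\theta=0$, i.e.\ $x=y$; the degenerate case where one of $x,y$ vanishes is handled by direct inspection of the formula.

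The heart of the matter is the triangle inequality. Writing $a=\sqrt{\left\|x\right\|},\ b=\sqrt{\left\|y\right\|},\ c=\sqrt{\left\|z\right\|}$ and half-angles $\alpha=\tfrac12\theta_{xy},\ \beta=\tfrac12\theta_{yz},\ \gamma=\tfrac12\theta_{xz}$, the second expression in the excerpt reads
$$d(x,y)^{2}=a^{2}+b^{2}-2ab\cos\alpha,$$
which is precisely the law of cosines for a planar triangle with sides $a,b$ meeting at angle $\alpha$. This suggests the following geometric realization: assuming $x,y,z$ are nonzero, place auxiliary points $P_x,P_y,P_z\in\mathbb{R}^{2}$ with $|OP_x|=a,\ |OP_y|=b,\ |OP_z|=c$, the angle $\angle P_xOP_y=\alpha$ on one side of $OP_y$ and $\angle P_yOP_z=\beta$ on the opposite side. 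A direct calculation then yields $|P_xP_y|=d(x,y),\ |P_yP_z|=d(y,z)$, and $|P_xP_z|=\sqrt{a^{2}+c^{2}-2ac\cos(\alpha+\beta)}$. The Euclidean triangle inequality for $P_x,P_y,P_z$ thus gives
$$\sqrt{a^{2}+c^{2}-2ac\cos(\alpha+\beta)}\leq d(x,y)+d(y,z).$$

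It remains to dominate $d(x,z)^{2}=a^{2}+c^{2}-2ac\cos\gamma$ by the left-hand side of the previous display, which amounts to showing $\cos\gamma\geq\cos(\alpha+\beta)$. The spherical triangle inequality applied to the unit vectors $x/\left\|x\right\|,y/\left\|y\right\|,z/\left\|z\right\|$ on $\mathbb{S}^{N-1}$ gives $\theta_{xz}\leq\theta_{xy}+\theta_{yz}$, i.e.\ $\gamma\leq\alpha+\beta$; since $\gamma\in[0,\pi/2]$ while $\alpha+\beta\in[0,\pi]$, a short case split (when $\alpha+\beta\leq\pi/2$ use monotonicity of $\cos$ on $[0,\pi/2]$; when $\alpha+\beta\in(\pi/2,\pi]$ note $\cos(\alpha+\beta)\leq 0\leq\cos\gamma$) produces the desired estimate. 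The degenerate cases where one of $x,y,z$ vanishes reduce to the bound $d(x,y)\geq|\sqrt{\left\|x\right\|}-\sqrt{\left\|y\right\|}|$ already recorded in the excerpt. I expect the main obstacle to be exactly this last comparison: since $\cos$ is not well-behaved under doubling on $[0,\pi]$, the case split is essential, and is where the ``law-of-cosines-at-half-angle'' structure of $d$ meets the spherical triangle inequality.
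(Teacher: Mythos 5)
Your proof is correct, and it follows the same skeleton as the paper's: reduce via the spherical triangle inequality $\theta_{xz}\leq\theta_{xy}+\theta_{yz}$ and monotonicity of cosine to a comparison at the angle $\alpha+\beta$, then establish a ``law of cosines'' triangle inequality for points at radii $\sqrt{\left\|x\right\|},\sqrt{\left\|y\right\|},\sqrt{\left\|z\right\|}$ with half-angles between them. Where you differ is in the last step: the paper squares the target inequality and grinds it down to the nonnegativity of
\begin{equation*}
\Bigl(\sqrt{\left\|x\right\|\left\|y\right\|}\sin\tfrac{\beta+\gamma}2-\sqrt{\left\|x\right\|\left\|z\right\|}\sin\tfrac\beta2-\sqrt{\left\|z\right\|\left\|y\right\|}\sin\tfrac\gamma2\Bigr)^2\geq0,
\end{equation*}
whereas you realize the three quantities as genuine Euclidean distances between auxiliary planar points $P_x,P_y,P_z$ and invoke the triangle inequality in $\mathbb{R}^2$. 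These are the same computation in different clothing --- the perfect square above is exactly the algebraic residue of your planar embedding --- but your version explains \emph{why} a perfect square appears and is arguably the more transparent argument; the paper's version has the virtue of being checkable line by line without any geometric construction. Two small remarks: your case split on $\alpha+\beta\lessgtr\pi/2$ is unnecessary, since $\gamma$ and $\alpha+\beta$ both lie in $[0,\pi]$ where cosine is monotone decreasing; and you are more careful than the paper about the degenerate cases where one of the points is the origin (though for $y=0$ as the middle point what you actually need is the upper bound $d(x,z)\leq\sqrt{\left\|x\right\|}+\sqrt{\left\|z\right\|}$, not the lower bound you cite --- both are immediate from the formula).
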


\begin{proof}
The symmetry property is obvious. For the positivity property, if
$d\left(x,y\right)=0$, then $\left\|x\right\|=\left\|y\right\|$
and $\sqrt{\left\|x\right\|-\left\|x\right\|\cos\frac\theta2}=0$
leading to $\theta=0$. Hence $x=y$.

Then we turn to prove the triangle inequality. Let
$$\alpha=\arccos\frac{\left\langle x,y\right\rangle}{\left\|x\right\|\left\|y\right\|},\; \beta=\arccos\frac{\left\langle x,z\right\rangle}{\left\|x\right\|\left\|z\right\|},\; \gamma=\arccos\frac{\left\langle z,y\right\rangle}{\left\|z\right\|\left\|y\right\|},\; 0\leq\alpha,\;\beta,\;\gamma\leq\pi.$$ Then we have $\beta+\gamma \geq\alpha$ from the triangle inequality of the spherical distance. Therefore,
$$d\left(x,y\right)\leq\sqrt{\left\|x\right\|+\left\|y\right\|-2\sqrt{\left\|x\right\|\left\|y\right\|}\cos\frac{\beta+\gamma}2}.$$
It suffices to show that
$$\sqrt{\left\|x\right\|+\left\|y\right\|-2\sqrt{\left\|x\right\|\left\|y\right\|}\cos\frac{\beta+\gamma}2}\leq d\left(x,z\right)+d\left(z,y\right).$$
Take the square of the above inequality and eliminate some items.
It suffices to show the following inequality,
\begin{align*}\left\|x\right\|\left\|y\right\|\sin^2\frac{\beta+\gamma}2&+\left\|x\right\|\left\|z\right\|\sin^2\frac\beta2+\left\|z\right\|\left\|y\right\|\sin^2\frac\gamma2+2\left\|z\right\|\sqrt{\left\|x\right\|\left\|y\right\|}\cos\frac\beta2\cos\frac\gamma2\\&+2\left\|x\right\|\sqrt{\left\|z\right\|\left\|y\right\|}\cos\frac\beta2\cos\frac{\beta+\gamma}2+2\left\|y\right\|\sqrt{\left\|z\right\|\left\|x\right\|}\cos\frac\gamma2\cos\frac{\beta+\gamma}2\\\geq&2\left\|x\right\|\sqrt{\left\|z\right\|\left\|y\right\|}\cos\frac\gamma2+2\left\|y\right\|\sqrt{\left\|z\right\|\left\|x\right\|}\cos\frac\beta2+2\left\|z\right\|\sqrt{\left\|x\right\|\left\|y\right\|}\cos\frac{\beta+\gamma}2.\end{align*}
And the inequality is equivalent to
$$\left(\sqrt{\left\|x\right\|\left\|y\right\|}\sin\frac{\beta+\gamma}2-\sqrt{\left\|x\right\|\left\|z\right\|}\sin\frac\beta2-\sqrt{\left\|z\right\|\left\|y\right\|}\sin\frac\gamma2\right)^2\geq0.$$
Proposition \ref{d(x,y)} is therefore proved.
\end{proof}
\begin{rem}
i). For the one dimensional case, the metric $d(x,y)$ recedes to $$d(x,y)=\left\{\begin{array}{l}\sqrt{\left|x-y\right|},\;xy\leq0\\\left|\sqrt{\left|x\right|}-\sqrt{\left|y\right|}\right|,\;xy>0\end{array}\right..$$
The ball with respect to this metric in this case was already used in \cite{BD2} to define the generalized Hardy--Littlewood maximal operator.\\
ii). A continuous rectifiable curve between two distinct points does not
necessarily exist with respect to this metric. For example, if
we take $x=-1$ and $y=1$ for the one dimensional case, then distance between $x$ and $y$ with respect to the induced length metric is no less than $\underset
n{\sup}\sum_{i=1}^n\sqrt{\frac2n}=\infty$.
\end{rem}

\begin{prop}
$\left(\mathbb{R}^N,d\right)$ is a complete metric space.
\end{prop}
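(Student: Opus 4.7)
The plan is to pick an arbitrary $d$-Cauchy sequence $\{x_n\}\subset\mathbb{R}^N$ and construct its limit by first extracting convergence of the radial component from the built-in inequality $d(x,y)\geq|\sqrt{\|x\|}-\sqrt{\|y\|}|$, and then, in the generic case, extracting convergence of the angular component by exploiting completeness of the sphere $\mathbb{S}^{N-1}$.

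First I would observe that $d(x,0)=\sqrt{\|x\|}$ (by direct substitution in the defining formula), so the Cauchy sequence is automatically bounded in Euclidean norm, and the inequality recorded in the definition of $d$ shows that $\{\sqrt{\|x_n\|}\}$ is Cauchy in $\mathbb{R}$. Set $r:=\lim_n\|x_n\|\geq 0$. If $r=0$, then $d(x_n,0)=\sqrt{\|x_n\|}\to 0$ and the sequence converges to the origin in $d$; this subcase must be singled out because the angular coordinate is not defined at the origin.

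If $r>0$, then for $n,m$ large the numbers $\|x_n\|$ and $\|x_m\|$ are bounded away from zero, and the identity
\[
d(x_n,x_m)^2 = \|x_n\|+\|x_m\|-2\sqrt{\|x_n\|\|x_m\|}\cos\bigl(\tfrac{\theta_{n,m}}{2}\bigr),
\]
where $\theta_{n,m}\in[0,\pi]$ is the Euclidean angle between $x_n$ and $x_m$, together with the Cauchy hypothesis and $\|x_n\|,\|x_m\|\to r$, forces $\cos(\theta_{n,m}/2)\to 1$, hence $\theta_{n,m}\to 0$. Writing $\omega_n:=x_n/\|x_n\|\in\mathbb{S}^{N-1}$ and using $\|\omega_n-\omega_m\|^2=2(1-\cos\theta_{n,m})$, the sequence $\{\omega_n\}$ is Cauchy on the complete sphere $\mathbb{S}^{N-1}$, hence converges to some $\omega\in\mathbb{S}^{N-1}$. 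Substituting $x=x_n$, $y=r\omega$ into the defining formula of $d$ and using continuity of the Euclidean norm and inner product gives $d(x_n,r\omega)\to 0$.

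The only genuine subtlety is the bifurcation at $r=0$: the metric $d$ degenerates in the angular direction as one approaches the origin (reflecting the fact, noted in the preceding remark, that the induced length metric is infinite), so the argument on the sphere is unavailable there, and one must fall back on the direct evaluation $d(x,0)=\sqrt{\|x\|}$. Everything else is routine, reducing to joint continuity of $d^2$ in the Euclidean quantities $\|x\|,\|y\|,\langle x,y\rangle$.
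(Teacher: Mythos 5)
Your proof is correct, but it takes a genuinely different route from the paper's. The paper argues by showing that $d$-convergence of sequences coincides with Euclidean convergence (i.e.\ that $d$ induces the standard topology on $\mathbb{R}^N$) and then reads off completeness from completeness of the Euclidean space; you instead take an arbitrary $d$-Cauchy sequence and build its limit by hand, separating the radial component (via $d(x,0)=\sqrt{\left\|x\right\|}$ and the lower bound $d(x,y)\geq\left|\sqrt{\left\|x\right\|}-\sqrt{\left\|y\right\|}\right|$) from the angular one (via Cauchyness on $\mathbb{S}^{N-1}$), with the correct bifurcation at $r=0$ where the angular coordinate degenerates. Your approach is slightly more robust on a point the paper glosses over: mere topological equivalence of two metrics does not by itself transfer completeness (Cauchy sequences are not a topological notion), so the paper's argument implicitly still needs the observation that a $d$-Cauchy sequence is Euclidean-bounded --- exactly your identity $d(x,0)=\sqrt{\left\|x\right\|}$ --- in order to extract a Euclidean-convergent subsequence and conclude. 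What the paper's route buys in exchange is the explicit, reusable fact that $d$ and the Euclidean metric generate the same topology, which is convenient elsewhere (e.g.\ when discussing closures of balls and supports); your direct construction proves completeness cleanly but does not record that equivalence.
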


\begin{proof}
We will show that $d(x,y)$ is equivalent to the Euclidean metric.
If $y_n\rightarrow y$ with respect to the Euclidean metric, then
$d\left(y_n,y\right)\rightarrow0$ obviously. If
$d\left(y_n,y\right)\rightarrow0$, then
$\left\|y_n\right\|\rightarrow\left\|y\right\|$. Denote
$\theta_n=\arccos\frac{\left\langle
y_n,y\right\rangle}{\left\|y_n\right\|\left\|y\right\|}$. Then
$$\sqrt{2\left\|y\right\|-2\left\|y\right\|\lim_{n\rightarrow\infty}\cos\frac{\theta_n}2}=0.$$
So, $\lim_{n\rightarrow\infty}\cos\theta_n=1$ and
$\lim_{n\rightarrow\infty}\left\langle
y_n,y\right\rangle=\left\|y\right\|^2$. Hence
\begin{align*}\lim_{n\rightarrow\infty}\left\|y_n-y\right\|=\lim_{n\rightarrow\infty}\sqrt{\left\|y_n\right\|^2+\left\|y\right\|^2-2\left\langle y_n,y\right\rangle}=0.\tag*{\qedhere}\end{align*}
\end{proof}

The closure of an open ball in a metric space is not necessarily
the closed ball. In \cite {Wo} the authors gave a sufficient but
not necessary condition such that the closure of the open ball is
the closed ball. They showed that if the metric is weakly convex,
i.e., for any two different points $x$ and $y$, there exists
$z\neq x,y$, such that $d(x,y)=d(x,z)+d(z,y)$. The metric $d$ we
are concerned with is not weakly convex obviously but the closure
of the open ball with respect to this metric is still the closed
ball.
\begin{thm}
The closure $\overline{B_0\left(x,r\right)}$ of the open ball
$B_0\left(x,r\right)=\left\{y:d\left(y,x\right)<r\right\},\;r>0$
is the closed ball $B(x,r)$.
\end{thm}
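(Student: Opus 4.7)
The inclusion $\overline{B_0(x,r)}\subseteq B(x,r)$ is routine: by the preceding proposition the topology generated by $d$ coincides with the Euclidean topology, so $d(\cdot,x)$ is continuous and $B(x,r)$ is a closed set containing $B_0(x,r)$. The nontrivial task is to show that every $y$ with $d(y,x)=r>0$ lies in $\overline{B_0(x,r)}$; for this I will exhibit, for each such $y$, a sequence $y_n\to y$ satisfying $d(y_n,x)<r$.

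The plan hinges on the explicit formula
\begin{equation*}
d(y,x)^2=\|x\|+\|y\|-2\sqrt{\|x\|\|y\|}\cos(\theta/2),\qquad \theta\in[0,\pi],
\end{equation*}
which shows that $d(y,x)$ strictly decreases whenever one either rotates $y$ toward $x$ (increasing $\cos(\theta/2)$) or moves $\sqrt{\|y\|}$ closer to $\sqrt{\|x\|}$. This yields explicit perturbations in the generic cases: if $x=0$ take $y_n=(1-1/n)y$; if $y=0\neq x$ take $y_n=x/(n\|x\|)$ (on the ray through $x$, so $\theta=0$); if $x,y\neq 0$ are positively collinear ($\theta=0$) take $y_n=(1\pm 1/n)y$ with the sign chosen so that $\sqrt{\|y_n\|}$ approaches $\sqrt{\|x\|}$; and if $\theta\in(0,\pi)$, within the two-dimensional plane spanned by $x$ and $y$ rotate $y$ toward $x$ by the small angle $\theta/n$, keeping $\|y_n\|=\|y\|$, which strictly increases $\cos(\theta_n/2)$. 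In each of these cases a direct substitution into the formula above gives $d(y_n,x)<r$ and $y_n\to y$ in the Euclidean (equivalently $d$) metric.

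The principal obstacle is the antipodal configuration $\theta=\pi$, where the plane-rotation recipe breaks down because any plane through the origin containing $y$ also contains $x$, and the Euclidean segment from $y$ to $x$ passes through $0$. For $N\ge 2$ I will fix a unit vector $v\perp x$ (hence also $v\perp y$) and set $y_n=\cos(1/n)\,y+\sin(1/n)\|y\|v$. A direct computation yields $\|y_n\|=\|y\|$ and $\langle y_n,x\rangle=-\|x\|\|y\|\cos(1/n)$, so $\theta_n=\pi-1/n$ and $\cos(\theta_n/2)=\sin(1/(2n))>0$, whence
\begin{equation*}
d(y_n,x)^2=\|x\|+\|y\|-2\sqrt{\|x\|\|y\|}\sin(1/(2n))=r^2-2\sqrt{\|x\|\|y\|}\sin(1/(2n))<r^2.
\end{equation*}
For the one-dimensional antipodal case ($xy<0$) the metric reduces to $d(x,y)=\sqrt{|x-y|}$, and a one-sided Euclidean nudge $y_n=y+\sign(x)/n$ (for $n$ large enough to preserve $\sign(y)$) immediately yields $d(x,y_n)<r$ with $y_n\to y$. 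Assembling all cases completes the proof.
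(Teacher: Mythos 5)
Your proof is correct, and it takes a genuinely different route from the paper's. The paper argues non-constructively: it sets $L_x(y)=d(x,y)^2$ and shows $L_x$ has no local minimum except at $y=x$, by computing $\partial L_x/\partial y_i$ on $\mathbb{R}^N\setminus\{0\}$, deducing that the only critical point is $y=x$, and then checking the non-smooth point $y=0$ by restricting $L_x$ to a line. You instead build explicit approximating sequences in each geometric configuration (radial shrinking when $x=0$ or $y=0$ or $\theta=0$, rotation toward $x$ in the plane spanned by $x$ and $y$ when $\theta\in(0,\pi)$, and an orthogonal rotation in the antipodal case $\theta=\pi$, with a separate one-dimensional nudge). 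Your case analysis is exhaustive, each perturbation is verified directly from the formula $d(y,x)^2=\|x\|+\|y\|-2\sqrt{\|x\|\|y\|}\cos(\theta/2)$, and the limits are Euclidean (hence, by the preceding proposition, also $d$-limits). What your approach buys is robustness: $L_x$ fails to be differentiable not only at $y=0$ but also on the antipodal set $\{y:\langle x,y\rangle=-\|x\|\|y\|\}$, where the term $\sqrt{2(\|x\|\|y\|+\langle x,y\rangle)}$ is not smooth, and the paper's critical-point computation passes over this locus; your explicit treatment of $\theta=\pi$ covers exactly that case (and, as you might note, an even simpler choice $y_n=(1-1/n)y$ would also work there, since $d(y,x)^2=\|x\|+\|y\|$ is increasing in $\|y\|$ when $\theta=\pi$). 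The cost is length: the paper's argument, where it applies, dispatches all generic configurations at once.
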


\begin{proof}
Let $y$ be a point $\mathbb R^N$ distinct from $x$ such that
$d(x,y)=r$. We show that for any $\varepsilon>0$, there exists
$z\in B(y,\varepsilon)$, such that $d(x,z)<r=d(x,y)$. Let
$M_{x},\,x\neq0$ be the mapping $M_{x}:\;\mathbb R^N
\rightarrow\lbrack0,+\infty),\;\;y\mapsto d(x,y)$ and
$L_{x}(y):=M_{x}{(y)}^2$. It suffices to show that the function
$L_{x}$ takes no minimum point on $\mathbb R^N$ except at $y=x$.
Notice that $L_{x}$ is differentiable on
$\mathbb{R}^N\backslash\left\{0\right\}$. We calculate the points
such that
$$0=\frac{\partial L_{x}}{\partial y_i}=\frac{y_i}{\left\|y\right\|}-\frac{\left\|x\right\|\frac{y_i}{\left\|y\right\|}+x_i}{\sqrt{2\left(\left\|x\right\|\left\|y\right\|+\left\langle x,y\right\rangle\right)}},\;\;i=1,2,\cdots ,N.$$
By summing up the square, we get
$$\sqrt{2\left(\left\|x\right\|\left\|y\right\|+\left\langle x,y\right\rangle\right)}=2\left\|y\right\|\;\mathrm{and}\;x_i=ty_i,\;\mathrm{where}\;t=2-\frac{\left\|x\right\|}{\left\|y\right\|}.$$
Thus $2y_i=\left|t\right|y_i+ty_i$ and $y=x$. For the point $y=0$,
consider the function
\begin{align*}L\left(y_1\right):=L_{x}\left(y_1,0,\dots,0\right)&=\left\|x\right\|+\left|y_1\right|-\sqrt{2\left(\left\|x\right\|\left|y_1\right|+x_1y_1\right)}\\&=\left\{\begin{array}{l}\left\|x\right\|+y_1-\sqrt{2\left(\left\|x\right\|+x_1\right)y_1},\;\;y_1\geq0\\\left\|x\right\|-y_1-\sqrt{-2\left(\left\|x\right\|-x_1\right)y_1},\;\;y_1<0.\end{array}\right.\end{align*}
It does not take minimum at $y_1=0$ obviously. Therefore, $L_{x}$
takes no minimum point on $\mathbb R^N$ except at $y=x$.
\end{proof}

The metric space $\left(\mathbb{R}^N,d\right)$, rather than the
standard Euclidean metric space, is the natural metric space
corresponding to the $(k,1)$-generalized setting when metric is
involved due to the expression of the $(k,1)$-generalized
translation operators. In the following theorem we give a
characterization of support of the $(k,1)$-generalized translation
of a function supported in
$B(0,r)=\left\{y\in\mathbb{R}^N:\sqrt{\left\|y\right\|}\leq
r\right\}$.
\begin{thm}
Let $f=f_0\left(\left\|\cdot\right\|\right)$ be a nonnegative
radial function on ${{L}^{2}}\left(
{{\mathbb{R}}^{N}},{\vartheta_{k,1}}\left( x \right)dx \right)$,
$\supp f=B(0,r)$, then $$\mathrm{supp}\tau_xf=\bigcup_{g\in
G}B(gx,r).$$
\end{thm}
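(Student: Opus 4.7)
My plan is to combine the property $\tau_x f(y) = \tau_y f(x)$ from \eqref{prop1} with the explicit radial formula \eqref{radial} to write
$$\tau_x f(y) = C\, V_k(h)(x), \qquad h(\xi) := \int_{-1}^1 f_0\bigl(A - B(\xi)u\bigr)(1-u^2)^{\frac{N}{2}+\langle k\rangle-2}du,$$
where $A := \|x\|+\|y\|$ and $B(\xi) := \sqrt{2(\|x\|\|y\|+\langle \xi, y\rangle)}$. The key observation is that when $\xi = gx$ for some $g \in G$ (so $\|gx\| = \|x\|$), one has $A - B(gx) = d(gx, y)^2$ exactly; this ties the integrand's minimum value to the metric defined in Section~3.

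For the inclusion $\supp(\tau_x f)\subseteq \bigcup_{g\in G} B(gx, r)$, I fix $y \notin \bigcup_{g} B(gx,r)$, so $A - B(gx) = d(gx, y)^2 > r^2$ for every $g \in G$. Since $\xi \mapsto \langle \xi, y\rangle$ is a linear functional on the convex polytope $co(G\cdot x)$, it attains its maximum at an extreme point $g_* x \in G\cdot x$, giving
$$B(\xi)^2 \leq B(g_* x)^2 < (A-r^2)^2 \qquad \text{for all } \xi \in co(G\cdot x) \supseteq \supp(\mu_x).$$
Hence $A - B(\xi)u \geq A - B(\xi) > r^2$ for every $u \in [-1,1]$, which places the argument of $f_0$ strictly outside $\supp(f_0)\subseteq [0, r^2]$. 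The inner integral vanishes identically on $\supp(\mu_x)$, so $V_k(h)(x)=0$ and $\tau_x f(y) = 0$.

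For the reverse inclusion, I pick $y_0$ in the interior of $B(g_0 x, r)$ for some $g_0 \in G$, so $A - B(g_0 x) = d(g_0 x, y_0)^2 < r^2$ at $y=y_0$. For $u$ near $1$, the argument $A - B(g_0 x)u$ dips into the open interval $(0, r^2)$; since $\supp(f_0) = [0,r^2]$ forces $\{f_0 > 0\}$ to have positive measure in every nontrivial sub-interval of $[0,r^2]$, this yields $h(g_0 x) > 0$. Continuity of $h$ in $\xi$ propagates this positivity to an open neighborhood $U$ of $g_0 x$, so $V_k(h)(x) = \int h\, d\mu_x > 0$ provided $\mu_x$ gives positive mass to $U$, i.e.\ $g_0 x \in \supp(\mu_x)$.

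The main obstacle is exactly this last point: verifying $G\cdot x \subseteq \supp(\mu_x)$ so that the "$\supseteq$" half of the claim closes. The "$\subseteq$" half used only the standard inclusion $\supp(\mu_x)\subseteq co(G\cdot x)$ together with extreme-point maximization, which is entirely elementary; the reverse inclusion requires actual mass of $\mu_x$ near each vertex. For $k>0$ this should be accessible via the known regularity/positivity of the intertwining measure on $co(G\cdot x)$; an alternative route is to combine the $G$-equivariance identity $V_k(F\circ g)(x) = V_k(F)(gx)$ (yielding $\tau_x f(gy) = \tau_{g^{-1}x}f(y)$ for radial $f$) with the mass-preservation \eqref{inttauye} and a duality-type argument against test functions supported near $g_0 x$, reducing the full statement to the case $g_0 = e$.
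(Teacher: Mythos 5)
Your strategy is essentially the paper's: apply the radial formula \eqref{radial}, observe that at $\eta=gx$ the argument of $f_0$ equals $d(gx,y)^2$, obtain the inclusion $\supp\tau_xf\subseteq\bigcup_{g}B(gx,r)$ from the lower bound of that argument by $\min_{g}d(gx,y)^2$ over $\supp\mu_x\subseteq co(G\cdot x)$ (your extreme-point maximization is exactly a proof of the paper's inequality \eqref{min}), and obtain the reverse inclusion from positivity of the inner integral for $\eta$ near some $g_0x$ lying in $\supp\mu_x$. The obstacle you single out at the end --- that $G\cdot x\subseteq\supp\mu_x$ --- is not open: it is a theorem of Gallardo and Rejeb, cited as \cite{Ga}, and the paper invokes it at precisely this point. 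So that part of your argument closes by citation rather than by the workarounds you sketch.

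There remain two steps you do not address that the paper must (and does) handle. First, \eqref{radial} is stated only for radial Schwartz functions, whereas your $f$ is merely a nonnegative radial $L^2$ function with $\supp f=B(0,r)$; applying the formula pointwise to such $f$ requires justification. The paper first extends \eqref{radial} to continuous radial functions in $L^2$, proves the theorem there, and then passes to general nonnegative radial $L^2$ functions by density together with positivity of $\tau_x$ on radial functions. Second, your reverse inclusion only yields the open balls $B_0(gx,r)\subseteq\supp\tau_xf$; concluding the closed balls requires that the closure of the open $d$-ball is the closed $d$-ball, which is not automatic in a general metric space (the metric $d$ is not weakly convex) and is exactly the content of the paper's preceding theorem on $\overline{B_0(x,r)}=B(x,r)$. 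A smaller point: your positivity of $h(g_0x)$ tacitly assumes $\sqrt{2(\left\|x\right\|\left\|y_0\right\|+\left\langle g_0x,y_0\right\rangle)}>0$ so that the argument of $f_0$ sweeps a nontrivial subinterval of $[0,r^2]$; the degenerate case deserves a word. With these points supplied, your proof coincides in substance with the paper's, the only stylistic difference being that you argue positivity directly at interior points while the paper argues by contradiction from the definition of support.
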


\begin{proof}
We extend the formula of $(k,1)$-generalized translations on
radial Schwartz functions \eqref{radial} to all continuous radial
functions on ${{L}^{2}}\left(
{{\mathbb{R}}^{N}},{\vartheta_{k,1}}\left( x \right)dx \right)$
first. The proof goes similar as the Lemma 3.4 in \cite{}. The
only difference is to take the set $A_n$ in the proof as
$$A_n\equiv A_n(y):=\left\{x\in\mathbb{R}^N:2^{-n}\leq\left|\sqrt{\left\|x\right\|}-\sqrt{\left\|y\right\|}\right|\leq\sqrt{\left\|x\right\|}+\sqrt{\left\|y\right\|}\leq2^n\right\}$$
for $n\in\mathbb{N}$ and
$n\geq{\textstyle\frac12}\left[\log\;\left\|y\right\|/\log\;2\right]+1$,
since
$$\left|\sqrt{\left\|x\right\|}-\sqrt{\left\|y\right\|}\right|\leq\sqrt{\left\|x\right\|+\left\|y\right\|-\sqrt{2\left(\left\|x\right\|\left\|y\right\|+\left\langle\eta,y\right\rangle\right)}u}\leq\sqrt{\left\|x\right\|}+\sqrt{\left\|y\right\|}$$
for $\eta\in co(G.x)$ and $u\in\left[-1,1\right]$.

Then we prove the theorem for continuous nonnegative radial
functions. For the proof of $supp\tau_xf\subseteq\bigcup_{g\in
G}B(gx,r)$, from the radial formula \eqref{radial} of
$(k,1)$-generalized translations and notice that for any $\eta\in
co(G.x)$ and $u\in[-1,1]$,
\begin{align}\label{min}\sqrt{\left\|x\right\|+\left\|y\right\|-\sqrt{2\left(\left\|x\right\|\left\|y\right\|+\left\langle\eta,y\right\rangle\right)}u}\geq \underset{g\in G}{\min}\;d\left(gx,y\right),\end{align}
we have $\tau_xf(y)=0$ for $y\in\left(\bigcup_{g\in
G}B(gx,r)\right)^c$ if $\mathrm{supp}f\subseteq B(0,r)$. For the
converse part $\bigcup_{g\in G}B(gx,r)\subseteq\supp\tau_xf$, we
will show that $\bigcup_{g\in G}B_0(gx,r)\subseteq\supp\tau_xf$
first. Suppose there exists a $y\in\bigcup_{g\in G}B_0(gx,r)$ for
which $y\not\in\supp\tau_xf$. Then there exists $\varepsilon>0$,
such that for any $z\in B(y,\varepsilon)$, we have
$z\in\bigcup_{g\in G}B_0(gx,r)$ (that is, there also exists a
$g\in G$ such that $d\left(z,gx\right)< r$) and
\begin{align*}0=\tau_xf(z)=&\frac{\Gamma\left(\frac{N-1}2+\left\langle k\right\rangle\right)}{\sqrt\pi\Gamma\left(\frac{N-2}2+\left\langle k\right\rangle\right)}\times\\&\int_{\mathbb{R}^N}\int_{-1}^1f_0\left(\left\|x\right\|+\left\|z\right\|-\sqrt{2\left(\left\|x\right\|\left\|z\right\|+\left\langle\eta,z\right\rangle\right)}u\right)\left(1-u^2\right)^{\frac N2+\left\langle k\right\rangle-2}dud\mu_x\left(\eta\right).\end{align*}
Thus
$$f_0\left(\left\|x\right\|+\left\|z\right\|-\sqrt{2\left(\left\|x\right\|\left\|z\right\|+\left\langle\eta,z\right\rangle\right)}u\right)=0$$
for any $\eta\in \supp\mu_x$ and $u\in[-1,1]$. Then from a result
of Gallardo and Rejeb (see \cite{Ga}), that the orbit of $x$,
$G.x$, is contained in $\supp\mu_x$, we can select   $u=1$ and
$\eta=gx$ for the above $g$. Then we get
$f_0\left(d\left(gx,z\right)^2\right)=0$ for all $z\in
B(y,\varepsilon)$. But $d\left(z,gx\right)< r$, which contradicts
to that $\supp f_0=[0,r^2]$. Then from Theorem 3.4, we get
$\bigcup_{g\in G}B(gx,r)\subseteq\supp\tau_xf$.

The conclusion for all nonnegative radial function on
${{L}^{2}}\left( {{\mathbb{R}}^{N}},{\vartheta_{k,1}}\left( x
\right)dx \right)$ can then be derived from the density of
continuous functions with compact support $B(0,r)$ in
${{L}^{2}}\left( B(0,r),{\vartheta_{k,1}}\left( x \right)dx
\right)$ and the positivity of the $(k,1)$-generalized
translations on radial functions as in \cite[Theorem 1.2]{Te1}.
\end{proof}

Denote by $dm_{k,1}(x)=\vartheta_{k,1}\left(x\right)dx$. The
measure $m_{k,1}$ satisfies the scaling property
\begin{align}\label{scaling}m_{k,1}\left(B\left(tx,\sqrt tr\right)\right)=t^{2\left\langle k\right\rangle+N-1}m_{k,1}\left(B\left(x,r\right)\right),\;t>0.\end{align}
From polar coordinate transformation we have
\begin{align*}m_{k,1}\left(B(x,r)\right)&=\underset{\rho+\left\|x\right\|-2\sqrt{\rho\left\|x\right\|}\cos\frac\theta2\leq r^2}{\int_{\mathbb S^{N-1}}\int_{\left(0,+\infty\right)}^{}}\;\rho^{2\left\langle k\right\rangle+N-2}d\rho h_k\left(\omega\right)d\omega\\&\overset{u=\sqrt\rho}=\underset{u^2+\left\|x\right\|-2u\sqrt{\left\|x\right\|}\cos\frac\theta2\leq r^2}{\int_{\mathbb S^{N-1}}\int_{\left(0,+\infty\right)}^{}}\;u^{2(2\left\langle k\right\rangle+N)-3}duh_k\left(\omega\right)d\omega\\&\overset{z=u\omega}=\int_{E\left(x_\omega,r\right)}\left\|z\right\|^{2\left\langle k\right\rangle+N-2}h_k\left(z\right)dz,\end{align*}
where $$\theta=\arccos\frac{\left\langle
x,\omega\right\rangle}{\left\|x\right\|},\;
x_\omega=\sqrt{\left\|x\right\|}\frac{x+\left\|x\right\|\omega}{\left\|x+\left\|x\right\|\omega\right\|},$$
and $E\left(x_\omega,r\right)$ denotes the Euclidean ball centered
at $x_\omega$ with radius $r$. For the one dimensional case, this expression coincides that of the measure of the ball in the proof of Lemma 2.2 in \cite{BD2}.
So if $2\left\langle
k\right\rangle+N-2> 0$, then for any $x\in\mathbb R^N$ and $r>0$,
$m_{k,1}\left(B\left(x,r\right)\right)$ is finite and
$m_{k,1}\left(B(tx,r)\right)$ is nondecreasing as $t$ grows.
  It is then easy to check that $m_{k,1}$ is a doubling measure when $2\left\langle k\right\rangle+N-2> 0$ combining \eqref{scaling}.
Therefore, $(\mathbb R^N,d,m_{k,1})$ is a space of homogeneous
type and for all $f\in
L^1\left(\mathbb{R}^N,\vartheta_{k,1}\left(x\right)dx\right)\cap
L^2\left(\mathbb{R}^N,\vartheta_{k,1}\left(x\right)dx\right)$ and
$\lambda>0$, there exists the corresponding Calder\'on--Zygmund
decomposition of $f$ satisfying $(i)$--$(v)$.

Define the distance between the two orbits $G.x$ and $G.y$ as
$d_G\left(x,y\right)=\underset{g\in G}{\min}\;d\left(gx,y\right).$
Now we are ready to give the H\"ormander type condition adapted to
$(k,1)$-generalized setting. It is a modification of the H\"ormander type condition
on a
homogeneous space in \cite[Chapter III, Theorem 2.4]{CG} because the Calder\'on--Zygmund theory on homogeneous spaces cannot be applied to this setting. We omit the proof because it is similar to that of Theorem 3.1 in \cite{AS}.

\begin{thm}\label{Hor}
For $2\left\langle k\right\rangle+N-2> 0$, let $K$ be a measurable
function on
$\mathbb{R}^N\times\mathbb{R}^N\backslash\left\{\left(x,g.x\right);x\in\mathbb{R}^N,\;g\in
G\right\}$ and $S$ be a bounded operator on ${{L}^{2}}\left(
{{\mathbb{R}}^{N}},{\vartheta_{k,1}}\left( x \right)dx \right)$
associated with the kernel $K$ such that for any compactly
supported function $f\in {{L}^{2}}\left(
{{\mathbb{R}}^{N}},{\vartheta_{k,1}}\left( x \right)dx \right)$,
$$S\left(f\right)\left(x\right)=\int_{\mathbb{R}^N}K\left(x,y\right)f\left(y\right)\vartheta_{k,1}\left(y\right)dy,\;G.x\cap \supp f=\varnothing.$$
If $K$ satisfies
$$\int_{d_G(x,y)>2d(y,y_0)}\left|K(x,\;y)-K(x,\;y_0)\right|\vartheta_{k,1}\left(x\right)dx\leq C,\;\;y,y_0\in\mathbb{R}^N,$$
then $S$ extends to a bounded operator on ${{L}^{p}}\left(
{{\mathbb{R}}^{N}},{\vartheta_{k,1}}\left( x \right)dx \right)$
for $1<p\leq 2$ and a weakly bounded operator on ${{L}^{1}}\left(
{{\mathbb{R}}^{N}},{\vartheta_{k,1}}\left( x \right)dx \right)$.
\end{thm}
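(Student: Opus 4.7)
The plan is to follow the classical Calder\'on--Zygmund scheme on the space of homogeneous type $(\mathbb{R}^N,d,m_{k,1})$ established in the preceding discussion, with the usual modification that singularities of $K$ sit along the entire orbit of the diagonal rather than on the diagonal alone. The first step is to prove weak $L^1$ boundedness; $L^p$ boundedness for $1<p\leq 2$ will then follow by Marcinkiewicz interpolation with the given $L^2$ bound.

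Given $f\in L^1\cap L^2(\mathbb{R}^N,\vartheta_{k,1}dx)$ and $\lambda>0$, I would apply the Calder\'on--Zygmund decomposition $f=h+b$ with $b=\sum_j b_j$ supported in balls $B_j=B(y_j,r_j)$ and properties (i)--(v) already available. The good part is easy: by Chebyshev and the $L^2$ bound,
\begin{align*}
m_{k,1}\{|Sh|>\lambda/2\}\leq \frac{4}{\lambda^2}\|Sh\|_2^2\leq \frac{C}{\lambda^2}\|h\|_\infty\|h\|_1\leq \frac{C}{\lambda}\|f\|_1.
\end{align*}
For the bad part, define the orbit-enlarged set $B_j^{\ast}=\bigcup_{g\in G}B(g.y_j,4r_j)$. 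Since $d$ is $G$-invariant (as $G\subset O(N)$ preserves norms and inner products) and $m_{k,1}$ is both $G$-invariant and doubling, one has $\sum_j m_{k,1}(B_j^{\ast})\leq C|G|\sum_j m_{k,1}(B_j)\leq C\|f\|_1/\lambda$. It therefore remains to control $|Sb(x)|$ for $x\notin E:=\bigcup_j B_j^{\ast}$.

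On $E^c$, the cancellation $\int b_j\,\vartheta_{k,1}dy=0$ gives
\begin{align*}
Sb_j(x)=\int_{B_j}\bigl(K(x,y)-K(x,y_j)\bigr)b_j(y)\vartheta_{k,1}(y)\,dy.
\end{align*}
The crucial geometric observation is that for $x\notin B_j^{\ast}$ one has $d_G(x,y_j)>4r_j$, and by the $G$-invariant triangle inequality $d_G(x,y)\geq d_G(x,y_j)-d(y,y_j)>4r_j-r_j\geq 2d(y,y_j)$ for every $y\in B_j$, so the H\"ormander region $\{d_G(x,y)>2d(y,y_j)\}$ contains $E^c$ uniformly in $y\in B_j$. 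Integrating $|Sb|$ against $\vartheta_{k,1}$ on $E^c$, applying Fubini, and invoking the H\"ormander hypothesis yields
\begin{align*}
\int_{E^c}|Sb(x)|\vartheta_{k,1}(x)dx\leq C\sum_j\|b_j\|_1\leq C\lambda\sum_j m_{k,1}(B_j)\leq C\|f\|_1,
\end{align*}
so by Chebyshev $m_{k,1}\{x\in E^c:|Sb(x)|>\lambda/2\}\leq C\|f\|_1/\lambda$. Combining the estimates proves the weak $(1,1)$ bound, and Marcinkiewicz interpolation finishes the range $1<p\leq 2$.

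The main obstacle, which distinguishes this setting from the classical one, is precisely the orbit-enlargement step: one must verify that $d$ is $G$-invariant, that $m_{k,1}$ is $G$-invariant, and that a triangle inequality relating $d_G(x,y)$ to $d_G(x,y_j)$ and $d(y,y_j)$ holds; only then does the H\"ormander condition stated in terms of $d_G$ interact correctly with the Calder\'on--Zygmund balls defined via $d$. All the remaining ingredients (doubling, Whitney-type decomposition, Marcinkiewicz interpolation) are available from the homogeneous-space framework developed earlier in this section, which is why the argument parallels that of Theorem~3.1 in \cite{AS} with only cosmetic changes.
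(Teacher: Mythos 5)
Your proposal is correct and coincides with the argument the paper itself invokes by reference: the proof of Theorem~\ref{Hor} is omitted in the paper and attributed to Theorem~3.1 of \cite{AS}, whose proof is precisely this Calder\'on--Zygmund scheme with orbit-enlarged balls $\bigcup_{g\in G}B(g.y_j,cr_j)$, the $G$-invariance of $d$ and $m_{k,1}$, and the triangle inequality $d_G(x,y)\geq d_G(x,y_j)-d(y,y_j)$ feeding into the H\"ormander hypothesis. Nothing essential is missing.
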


\section{Main Results}
We reformulate the reproducing kernel $\Lambda_{k,1}\left(x,y;z\right)$ of
$e^{z\triangle_{k,1}}$ from \eqref{eqn:4.21}, \eqref{tauye} and
\eqref{radial} as
\begin{align}\label{Lambda0}\Lambda_{k,1}\left(x,y;z\right)&=\frac1{(\sinh\;z)^{2\langle k\rangle+N-1}}e^{-\tanh\frac z2\left(\left\|x\right\|+\left\|y\right\|\right)}\tau_y\left(e^{-\frac1{\sinh z}\left\|\cdot\right\|}\right)(x)\\&\notag=\frac{\Gamma\left(\frac{N-1}2+\left\langle k\right\rangle\right)}{\sqrt\pi\Gamma\left(\frac{N-2}2+\left\langle k\right\rangle\right)}\frac1{(\sinh\;z)^{2\langle k\rangle+N-1}}e^{-\tanh\frac z2\left(\left\|x\right\|+\left\|y\right\|\right)}\\&\notag\;\;\;\;\cdot V_k\bigg(\int_{-1}^1e^{-\frac1{\sinh z}\left(\left\|x\right\|+\left\|y\right\|-\sqrt{2\left(\left\|x\right\|\left\|y\right\|+\left\langle\cdot,y\right\rangle\right)}u\right)}\left(1-u^2\right)^{\frac N2+\left\langle k\right\rangle-2}du\bigg)\left(x\right).\end{align}
Let $z=t$, $t>0$. For $0<t\leq 1$, $\sinh\;t$ behaves like $t$. So
\begin{align}\label{Lambda1}\left|\Lambda_{k,1}\left(x,y;t\right)\right|\leq C\frac1{t{}^{2\langle k\rangle+N-1}}\tau_y\left(e^{-\frac b{t}\left\|\cdot\right\|}\right)(x).\end{align}
For $t>1$, $\sinh\;t$ behaves like $e^t$. So
\begin{align}\label{Lambda2}\left|\Lambda_{k,1}\left(x,y;t\right)\right|\leq Ce^{-\left({2\langle k\rangle+N-1}\right)t}\tau_y\left(e^{-b\left\|\cdot\right\|}\right)(x).\end{align}

From \eqref{Delta} we can define the imaginary powers
$\left(-\triangle_{k,1}\right)^{-i\sigma},\sigma\in\mathbb{R}$ for
$f\in
L^2\left(\mathbb{R}^N,\vartheta_{k,1}\left(x\right)dx\right)$ of
the $(k,1)$-generalized harmonic oscillator $-\triangle_{k,1}$
naturally as
\begin{align}\label{imaginary}\left(-\triangle_{k,1}\right)^{-i\sigma}(f)\left(x\right)=\sum_{l,m,j}\left(2l+\lambda_{k,m}+1\right)^{-i\sigma}{\left\langle f,\Phi_{l,m,j}\right\rangle}_{k,1}\Phi_{l,m,j}\left(x\right).\end{align}
It is obviously a bounded operator on
$L^2\left(\mathbb{R}^N,\vartheta_{k,1}\left(x\right)dx\right)$
from its spectrum.

In what follow we put
\begin{align}\label{Kj}
K\left(x,y\right)=\int_0^\infty\Lambda_{k,1}\left(x,y;t\right)t^{i\sigma-1}dt.
\end{align}
It is then easy to verify the integral \eqref{Kj} converges
absolutely and that
$\left|K\left(x,y\right)\right|\leq C\frac1{d_G\left(x,y\right)^{2\left(2\langle k\rangle+N-1\right)}}$ for all $x,y\in \mathbb R^N$, $y\not\in G.x$, $2\left\langle
k\right\rangle+N-2> 0$.

Based on the formula
$$\lambda^{-i\sigma}=\frac1{\Gamma\left(i\sigma\right)}\int_0^\infty e^{-t\lambda}t^{i\sigma-1}dt,\;\;\;\lambda>0$$
and \eqref{imaginary}, \eqref{expand}, \eqref{integral}, we can
write $\left(-\triangle_{k,1}\right)^{-i\sigma}$ in the following
way (such definition goes back to \cite{NS2} and \cite{ST})
\begin{align*}\left(-\triangle_{k,1}\right)^{-i\sigma}\left(f\right)\left(x\right)&=\frac1{\Gamma\left(i\sigma\right)}\int_0^\infty e^{t\triangle_{k,1}}\left(f\right)\left(x\right)t^{i\sigma-1}dt\\&=\frac{c_{k,1}}{\Gamma\left(i\sigma\right)}\int_0^\infty t^{i\sigma-1}dt\int_{\mathbb{R}^N}f\left(y\right)\Lambda_{k,1}\left(x,y;t\right)\vartheta_{k,1}\left(y\right)dy.\end{align*}
We can observe that this integral converges absolutely for all compactly supported
functions $f\in {{L}^{2}}(
{{\mathbb{R}}^{N}},{\vartheta_{k,1}}\left( x \right)$ $dx )$
with $\supp f\cap G.x =\varnothing$.
And for compactly supported functions $f\in {{L}^{2}}\left(
{{\mathbb{R}}^{N}},{\vartheta_{k,1}}\left( x \right)dx \right)$,
$G.x\cap \supp f=\varnothing$,
$\left(-\triangle_{k,1}\right)^{-i\sigma}$ satisfies
$$\left(-\triangle_{k,1}\right)^{-i\sigma}\left(f\right)\left(x\right)=\frac{c_{k,1}}{\Gamma\left(i\sigma\right)}\int_{\mathbb{R}^N}K\left(x,y\right)f\left(y\right)\vartheta_{k,1}\left(y\right)dy$$
by changing the order of integration. We will show that the kernel
$K\left(x,y\right)$ of $\left(-\triangle_{k,1}\right)^{-i\sigma}$
satisfies the condition in Theorem \ref{Hor} to prove the
following main theorem.
\begin{thm}\label{main}
For $2\left\langle k\right\rangle+N-2> 0$, the imaginary powers
$\left(-\triangle_{k,1}\right)^{-i\sigma},\sigma\in\mathbb{R}$ of
the $(k,1)$-generalized harmonic oscillator $-\triangle_{k,1}$ are
bounded operators on ${{L}^{p}}\left(
{{\mathbb{R}}^{N}},{\vartheta_{k,1}}\left( x \right)dx \right)$,
$1<p<\infty$ and weakly bounded on ${{L}^{1}}\left(
{{\mathbb{R}}^{N}},{\vartheta_{k,1}}\left( x \right)dx \right)$.
\end{thm}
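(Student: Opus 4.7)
The strategy is to invoke Theorem~\ref{Hor}: once we verify the H\"ormander-type condition
$$
\int_{d_G(x,y) > 2 d(y, y_0)} |K(x,y) - K(x,y_0)|\,\vartheta_{k,1}(x)\,dx \le C, \qquad y, y_0 \in \RR^N,
$$
for the kernel $K(x,y)$ given by \eqref{Kj}, the $L^p$-boundedness for $1 < p \le 2$ and the weak-type $(1,1)$ estimate follow immediately. The range $2 < p < \infty$ is then obtained by duality, since the formal adjoint of $(-\triangle_{k,1})^{-i\sigma}$ on $L^2(\RR^N,\vartheta_{k,1}(x)dx)$ is $(-\triangle_{k,1})^{i\sigma}$, whose kernel has precisely the same form as in \eqref{Kj} with $\sigma$ replaced by $-\sigma$; accordingly the H\"ormander condition need only be verified uniformly in the sign of $\sigma$.

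To verify that condition I would split the integral defining $K(x,y) - K(x,y_0)$ at the natural scale $t_0 := d(y,y_0)^2$. On the near piece $0 < t \le t_0$ the two terms are estimated separately, combining \eqref{Lambda1} (for $0 < t \le 1$) and \eqref{Lambda2} (for $t > 1$) with the pointwise Gaussian-type inequality
$$
\tau_y\bigl(e^{-\lambda \|\cdot\|}\bigr)(x) \le C\, e^{-\lambda d_G(x,y)^2},
$$
which follows from the radial formula \eqref{radial} once one notes that the quantity $\|x\|+\|y\|-\sqrt{2(\|x\|\|y\|+\langle\eta,y\rangle)}u$ is minimized, over $\eta \in \mathrm{co}(G \cdot x)$ and $u \in [-1,1]$, at an extreme point $\eta = g^\ast x$ and $u = 1$, where it equals exactly $d_G(x,y)^2$. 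Since $t \le t_0$ and $d_G(x,y) > 2d(y,y_0)$ force $d_G(x,y)^2 \ge 4t$, the Gaussian produces strong decay; integrating against $dm_{k,1}(x)$ and using \eqref{inttauye} yields an $O(1)$ contribution.

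For the far piece $t \in (t_0, \infty)$ I would establish a regularity (difference-quotient) estimate
$$
|\Lambda_{k,1}(x,y;t) - \Lambda_{k,1}(x,y_0;t)| \le C\,\frac{d(y,y_0)}{\sqrt{t}}\cdot \frac{1}{t^{2\langle k\rangle + N - 1}} \Bigl[\tau_y\bigl(e^{-b\|\cdot\|/t}\bigr)(x) + \tau_{y_0}\bigl(e^{-b\|\cdot\|/t}\bigr)(x)\Bigr]
$$
for $0 < t \le 1$, with the analogous modification involving the exponential decay in $t$ for $t > 1$. Although the metric space $(\RR^N, d)$ admits no intrinsic notion of Cheeger-type differentiation, the integrand in the explicit representation \eqref{Lambda0}, after inserting the Bessel integral representation of $\widetilde{I}_\nu$, is classically smooth in $y$. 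One writes $\Lambda_{k,1}(x,y;t) - \Lambda_{k,1}(x,y_0;t)$ as a Euclidean line integral along the segment from $y_0$ to $y$, differentiates the exponent $\|x\|+\|y\|-\sqrt{2(\|x\|\|y\|+\langle\eta,y\rangle)}u$ classically, and then applies $V_k$, which preserves pointwise bounds uniform in $\eta \in \mathrm{co}(G \cdot x)$. Substituting this into $K(x,y) - K(x,y_0)$ and integrating $t^{-3/2}$ against $dt$ on $(t_0, \infty)$ produces the factor $d(y,y_0)/\sqrt{t_0} = 1$, while the integration in $x$ is again absorbed by \eqref{inttauye}.

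The main obstacle is precisely this regularity estimate: upon differentiating the integrand of \eqref{Lambda0} classically in $y$, one picks up a factor $1/\sqrt{\|x\|\|y\|+\langle\eta,y\rangle}$ which becomes singular exactly where $\eta \in \mathrm{co}(G \cdot x)$ is "antipodal" to $y$ on the relevant sphere. To handle this, one absorbs the singularity into the simultaneous vanishing of the exponential $e^{-\frac{1}{\sinh t}[\|x\|+\|y\|-\sqrt{\ldots}u]}$ via the elementary inequality $s\, e^{-s^2} \le C$, or equivalently rewrites the differentiated integrand through the Bessel integral symmetry $u \leftrightarrow -u$ as a bounded linear combination of $(k,1)$-translates of $e^{-\lambda \|\cdot\|}$ with $\lambda$ of order $1/\sqrt{t}$. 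Once this Lipschitz-type estimate in $y$ is in hand, the remaining bookkeeping is routine and parallels the reasoning implicit in the bound $|K(x,y)| \le C d_G(x,y)^{-2(2\langle k\rangle + N-1)}$ recorded after \eqref{Kj}.
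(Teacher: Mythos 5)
Your overall architecture matches the paper's: reduce to the H\"ormander condition of Theorem \ref{Hor}, get $2<p<\infty$ by duality, split the kernel in $t$, control one piece by a pure size estimate and the other by a difference estimate of the form $\left|\Lambda_{k,1}(x,y;t)-\Lambda_{k,1}(x,y_0;t)\right|\leq C\,d(y,y_0)\,t^{-(2\langle k\rangle+N-\frac12)}\bigl[\tau_y(e^{-c\|\cdot\|/t})(x)+\tau_{y_0}(e^{-c\|\cdot\|/t})(x)\bigr]$, then integrate in $x$ via \eqref{inttauye} together with the lower bound \eqref{min} on the region $d_G(x,y)>2d(y,y_0)$. (Your split at $t_0=d(y,y_0)^2$ versus the paper's split at $t=1$ is an immaterial organizational difference; both bookkeepings close.) The gap is in the one step you yourself flag as the main obstacle: the regularity estimate. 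You propose to differentiate the integrand of \eqref{Lambda0} classically in $y$ and integrate along the Euclidean segment from $y_0$ to $y$. That yields a bound proportional to $\|y-y_0\|$, whereas the H\"ormander condition requires $d(y,y_0)$; since $\|y-y_0\|^2=d(y,y_0)^2\bigl(\|y\|+\|y_0\|+2\sqrt{\|y\|\|y_0\|}\cos\frac\theta2\bigr)$, the conversion costs a factor of order $\sqrt{\|y\|+\|y_0\|}$. This factor is not controlled by the exponent $\|x\|+\|y\|-\sqrt{2(\|x\|\|y\|+\langle\eta,y\rangle)}u$, which can vanish while $\|x\|,\|y\|$ are arbitrarily large, so it cannot be absorbed by $se^{-s^2}\leq C$ as you suggest. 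In addition, intermediate points of the Euclidean segment are not controlled in the metric $d$ (the segment may pass near the origin), and the factor $1/\sqrt{\|x\|\|y\|+\langle\eta,y\rangle}$ degenerates precisely where the exponential is \emph{not} small (there the exponent tends to $\|x\|+\|y\|$, which may be comparable to $t$). None of these points is resolved by your sketch.

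The paper's resolution avoids differentiation altogether --- indeed Remark 3.2 ii) notes that rectifiable curves between distinct points need not exist for $d$, so no line-integral or Cheeger-type argument is available. The key ingredient you are missing is Lemma \ref{qusitrian}: the map $y\mapsto\sqrt{\|x\|+\|y\|-\sqrt{2(\|x\|\|y\|+\langle\eta,y\rangle)}u}$ is $1$-Lipschitz with respect to $d$, \emph{uniformly} in $u\in[-1,1]$ and $\eta\in co(G\cdot x)$; its proof reduces to the nonnegativity of a Gram determinant. Combined with the elementary one-variable difference-quotient bound $\bigl|e^{-s_1^2/\sinh t}-e^{-s_2^2/\sinh t}\bigr|\leq C t^{-1/2}e^{-b s_2^2/t}\,|s_1-s_2|$ (plus an easy separate treatment of the prefactor $e^{-\tanh\frac t2(\|x\|+\|y\|)}$), this gives Lemma \ref{Lambda} with no singular factors and no Euclidean increments. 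Without this lemma, or a genuinely worked-out substitute, your verification of the H\"ormander condition for $K^{(1)}$ does not go through.
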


\section{Proof of Theorem \ref{main} }
We start the proof with two lemmas. The first one is an
enhancement of the triangle inequality of the metric $d(x,y)$.
\begin{lem}\label{qusitrian}
For $u\in [-1,1]$, $\eta\in co(G.x)$, and $x,y\in\mathbb R^N$,
$$\left|\sqrt{\left\|x\right\|+\left\|y\right\|-\sqrt{2\left(\left\|x\right\|\left\|y\right\|+\left\langle \eta,y\right\rangle\right)}u}-\sqrt{\left\|x\right\|+\left\|z\right\|-\sqrt{2\left(\left\|x\right\|\left\|z\right\|+\left\langle \eta,z\right\rangle\right)}u}\right|\leq d\left(y,z\right).$$
\end{lem}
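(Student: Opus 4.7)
The plan is to re-express each of the two square roots on the left-hand side as a Euclidean distance from a common point $P$ to two auxiliary points $Q_{y},\,Q_{z}$ in $\mathbb{R}^{3}$, and then to apply the Euclidean reverse triangle inequality to reduce the lemma to a clean angular estimate.

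First I would reduce to the case $\|\eta\|=\|x\|$. Since $G\cdot x$ lies on the sphere of radius $\|x\|$, every $\eta\in co(G\cdot x)$ satisfies $\|\eta\|\le\|x\|$; replacing $\eta,\,y,\,z$ by $\tilde\eta:=(\eta,\sqrt{\|x\|^{2}-\|\eta\|^{2}})\in\mathbb{R}^{N+1}$, $\tilde y:=(y,0)$, $\tilde z:=(z,0)$ preserves every one of $\|y\|,\|z\|,\langle\eta,y\rangle,\langle\eta,z\rangle,\langle y,z\rangle$ and gives $\|\tilde\eta\|=\|x\|$. Setting $r=\sqrt{\|x\|}$, $a=\sqrt{\|y\|}$, $b=\sqrt{\|z\|}$, and letting $\alpha_{y},\alpha_{z},\theta\in[0,\pi]$ denote the angles between $\eta$ and $y$, between $\eta$ and $z$, and between $y$ and $z$, the identity $\|x\|\|y\|+\langle\eta,y\rangle=2\|x\|\|y\|\cos^{2}(\alpha_{y}/2)$ lets me write the quantity under the first square root as $A_{y}:=r^{2}+a^{2}-2ura\cos(\alpha_{y}/2)$, and similarly for $A_{z}$. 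I then define
\[
P:=(r,0,0),\quad Q_{y}:=(ua\cos(\alpha_{y}/2),\,ua\sin(\alpha_{y}/2),\,a\sqrt{1-u^{2}}),\quad Q_{z}:=(ub\cos(\alpha_{z}/2),\,ub\sin(\alpha_{z}/2),\,b\sqrt{1-u^{2}}).
\]
A direct expansion yields $|Q_{y}|=a$, $|Q_{z}|=b$, $|Q_{y}-P|^{2}=A_{y}$, and $|Q_{z}-P|^{2}=A_{z}$, so the Euclidean reverse triangle inequality in $\mathbb{R}^{3}$ gives $|\sqrt{A_{y}}-\sqrt{A_{z}}|\le|Q_{y}-Q_{z}|$.

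The last step is to prove $|Q_{y}-Q_{z}|\le d(y,z)$. The product-to-sum identity $\cos(\alpha_{y}/2)\cos(\alpha_{z}/2)+\sin(\alpha_{y}/2)\sin(\alpha_{z}/2)=\cos((\alpha_{y}-\alpha_{z})/2)$ yields $|Q_{y}-Q_{z}|^{2}=(a-b)^{2}+2u^{2}ab(1-\cos((\alpha_{y}-\alpha_{z})/2))$, while $d(y,z)^{2}=(a-b)^{2}+2ab(1-\cos(\theta/2))$. So the desired inequality reduces to $u^{2}(1-\cos((\alpha_{y}-\alpha_{z})/2))\le 1-\cos(\theta/2)$, which follows from $u^{2}\le 1$ together with the spherical triangle inequality $|\alpha_{y}-\alpha_{z}|\le\theta$ (noting that cosine is even and nonincreasing on $[0,\pi/2]$ and that the relevant half-angles lie in that interval).

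The main subtlety I anticipate is handling $u<0$: in that regime $A_{y}$ can exceed $\|x\|+\|y\|$ and therefore cannot be interpreted directly as a squared $d$-distance to any point in the ambient space. The third coordinate $a\sqrt{1-u^{2}}$ built into $Q_{y}$ is the geometric device that absorbs the sign of $u$ and makes the $\mathbb{R}^{3}$-construction work uniformly across $u\in[-1,1]$; without this extra coordinate the argument would break down precisely when $u<0$.
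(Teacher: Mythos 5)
Your proof is correct, and while it shares the paper's overall skeleton, both of its key steps are carried out by genuinely different means. The paper first writes $\eta=kT(x)$ with $0\le k\le 1$, introduces the deformed angles $\alpha_k=2\arccos\sqrt{\tfrac12(1+k\cos\alpha)}$, and proves the crucial estimate $|\alpha_k-\beta_k|\le\gamma$ by reducing it to the nonnegativity of the Gram determinant of $x,y,z$; it then finishes by expanding everything and exhibiting the resulting inequality as a sum of squares plus the remainder $\|x\|(\|y\|+\|z\|)(1-u^2)\ge 2\|x\|\sqrt{\|y\|\|z\|}(1-u^2)\cos\frac{\alpha_k-\beta_k}{2}$. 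Your lift $\tilde\eta=(\eta,\sqrt{\|x\|^2-\|\eta\|^2})$ makes the paper's $\alpha_k,\beta_k$ into honest angles between unit vectors in $\mathbb{R}^{N+1}$ (indeed $\cos\alpha_k=k\cos\alpha=\cos\alpha_y$), so the Gram-determinant computation is replaced by a direct appeal to the spherical triangle inequality --- the same tool the paper already uses in Proposition 3.1; and your isometric placement of $P,Q_y,Q_z$ in $\mathbb{R}^3$, with the third coordinate $a\sqrt{1-u^2}$ absorbing the case $u<0$, replaces the closing algebra by the Euclidean reverse triangle inequality. The computations check out ($|Q_y|=a$, $|Q_y-P|^2=A_y$, $|Q_y-Q_z|^2=(a-b)^2+2abu^2(1-\cos\frac{\alpha_y-\alpha_z}{2})\le d(y,z)^2$), and the degenerate cases $y=0$, $z=0$, $x=0$ cause no trouble since the corresponding $Q$-points collapse to the origin. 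What your route buys is conceptual transparency: it explains \emph{why} the lemma is a triangle inequality (everything is literally a Euclidean distance after embedding), it makes Proposition 3.1 a special case ($u=1$, $\eta=x$) rather than a template to be imitated, and it isolates exactly where $u^2\le1$ enters; the paper's route is more self-contained algebraically but obscures this geometry.
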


\begin{proof}
If $\eta\in co(G.x)$, then there exists a rotation transformation
$T$ such that $\eta=kT(x),\;0\leq k\leq 1$. So we assume $\eta=kx$
in the proof since
$\left\|T\left(x\right)\right\|=\left\|x\right\|$. Then
$$\sqrt{2\left(\left\|x\right\|\left\|y\right\|+\left\langle\eta,y\right\rangle\right)}u=2\sqrt{\left\|x\right\|\left\|y\right\|}\sqrt{\frac12\left(1+k\cos\alpha\right)}u;$$

$$\sqrt{2\left(\left\|x\right\|\left\|z\right\|+\left\langle\eta,z\right\rangle\right)}u=2\sqrt{\left\|x\right\|\left\|z\right\|}\sqrt{\frac12\left(1+k\cos\beta\right)}u.$$
Denote by
$$\alpha_k=2\arccos\sqrt{\frac12\left(1+k\cos\alpha\right)},\;\;\beta_k=2\arccos\sqrt{\frac12\left(1+k\cos\beta\right)}.$$
We assert that
\begin{align}\label{alphak}\left|\alpha_k-\beta_k\right|\leq\gamma.\end{align}
Here $\alpha,\;\beta$ and $\gamma$ are given as in the proof of
Proposition \ref{d(x,y)}.

Assume $\left\|x\right\|=\left\|y\right\|=\left\|z\right\|=1$.
Then \eqref{alphak} is equivalent to
$$1-\left\langle y,z\right\rangle^2-k^2\left\langle x,y\right\rangle^2-k^2\left\langle x,z\right\rangle^2+2k^2\left\langle x,y\right\rangle\left\langle y,z\right\rangle\left\langle x,z\right\rangle\geq0.$$
It suffices to show that
$$k^2\left(1-\left\langle y,z\right\rangle^2\right)-k^2\left\langle x,y\right\rangle^2-k^2\left\langle x,z\right\rangle^2+2k^2\left\langle x,y\right\rangle\left\langle y,z\right\rangle\left\langle x,z\right\rangle\geq0.$$
And it is equivalent to
$$\det\begin{bmatrix}1&\left\langle x,y\right\rangle&\left\langle x,z\right\rangle\\\left\langle y,x\right\rangle&1&\left\langle y,z\right\rangle\\\left\langle z,x\right\rangle&\left\langle z,y\right\rangle&1\end{bmatrix}\geq0.$$
It is the determinant of the Gram matrix of the three vectors $x$,
$y$, and $z$. Thus \eqref{alphak} is proved.

From assertion \eqref{alphak}, similar to the proof in Proposition
\ref{d(x,y)}, it suffices to show that
$$\left|\sqrt{\left\|x\right\|+\left\|y\right\|-2\sqrt{\left\|x\right\|\left\|y\right\|}u\cos\frac{\alpha_k}{2}}-\sqrt{\left\|x\right\|+\left\|z\right\|-2\sqrt{\left\|x\right\|\left\|z\right\|}u\cos\frac{\beta_k}2}\right|\leq\sqrt{\left\|y\right\|+\left\|z\right\|-2\sqrt{\left\|y\right\|\left\|z\right\|}\cos\frac{\alpha_k-\beta_k}2}.$$
And it suffices to show
\begin{align*}\left\|x\right\|\left\|y\right\|\left(1-u^2\cos^2\frac{\alpha_k}2\right)&+\left\|x\right\|\left\|z\right\|\left(1-u^2\cos^2\frac{\beta_k}2\right)+\left\|z\right\|\left\|y\right\|\sin^2\frac{\alpha_k-\beta_k}2\\&+2\left\|z\right\|\sqrt{\left\|x\right\|\left\|y\right\|}u\cos\frac{\beta_k}2\cos\frac{\alpha_k-\beta_k}2\\&+2\left\|x\right\|\sqrt{\left\|z\right\|\left\|y\right\|}u^2\cos\frac{\beta_k}2\cos\frac{\alpha_k}2+2\left\|y\right\|\sqrt{\left\|z\right\|\left\|x\right\|}u\cos\frac{\alpha_k-\beta_k}2\cos\frac{\alpha_k}2\\\geq2&\left\|x\right\|\sqrt{\left\|z\right\|\left\|y\right\|}\cos\frac{\alpha_k-\beta_k}2+2\left\|y\right\|\sqrt{\left\|z\right\|\left\|x\right\|}u\cos\frac{\beta_k}2+2\left\|z\right\|\sqrt{\left\|x\right\|\left\|y\right\|}u\cos\frac{\alpha_k}2.\end{align*}
The above is equivalent to
\begin{align*}\bigg(\sqrt{\left\|x\right\|\left\|y\right\|}u\sin\frac{\alpha_k}2-\sqrt{\left\|x\right\|\left\|z\right\|}u\sin\frac{\beta_k}2&-\sqrt{\left\|z\right\|\left\|y\right\|}\sin\frac{\alpha_k-\beta_k}2\bigg)^2+\left\|x\right\|\left(\left\|y\right\|+\left\|z\right\|\right)\left(1-u^2\right)\\&\geq2\left\|x\right\|\sqrt{\left\|z\right\|\left\|y\right\|}\left(1-u^2\right)\cos\frac{\alpha_k-\beta_k}2.\end{align*}
The Lemma is therefore proved.
\end{proof}
The next lemma is an estimate of the difference quotient analogue.
We can no longer make use of estimates of partial derivatives
because we cannot define differentiation on the metric space
corresponding to $(k,1)$-generalized analysis for the failure of the existence of continuous rectifiable curves between two distinct points (see Remark 3.2. ii).
\begin{lem}\label{Lambda}
For $0<t<1$, $y\neq y_0$,
$$\left|\frac{\Lambda_{k,1}\left(x,y;t\right)-\Lambda_{k,1}\left(x,y_0;t\right)}{d\left(y,y_0\right)}\right|\leq\frac C{t^{2\langle k\rangle+N-\frac12}}\left(\tau_{y_0}\left(e^{-\frac ct\left\|\cdot\right\|}\right)(x)+\tau_y\left(e^{-\frac ct\left\|\cdot\right\|}\right)(x)\right).$$
\end{lem}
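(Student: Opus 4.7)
The plan is to split into two cases depending on whether $d(y,y_0)\ge\sqrt{t}$ or $d(y,y_0)<\sqrt{t}$. In the easy case $d(y,y_0)\ge\sqrt{t}$, applying the pointwise bound \eqref{Lambda1} separately to $\Lambda_{k,1}(x,y;t)$ and $\Lambda_{k,1}(x,y_0;t)$ and dividing by $d(y,y_0)\ge\sqrt{t}$ costs a factor $1/\sqrt{t}$, which upgrades the prefactor $t^{-(2\langle k\rangle+N-1)}$ to the required $t^{-(2\langle k\rangle+N-1/2)}$ in front of $\tau_y+\tau_{y_0}$.

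The substance is the regime $d(y,y_0)<\sqrt{t}$. Combining \eqref{Lambda0} with the radial formula \eqref{radial}, one rewrites
\[
\Lambda_{k,1}(x,y;t)=\frac{C_k}{(\sinh t)^{2\langle k\rangle+N-1}}V_k\!\left(\int_{-1}^1 E(y)\,e^{-\rho_y^2(\cdot,u)/\sinh t}(1-u^2)^{\nu}\,du\right)\!(x),
\]
with $\nu=\tfrac{N}{2}+\langle k\rangle-2$, $E(y)=e^{-\tanh(t/2)(\|x\|+\|y\|)}$, and $\rho_y^2(\xi,u)=\|x\|+\|y\|-\sqrt{2(\|x\|\|y\|+\langle\xi,y\rangle)}\,u\ge 0$. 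I would then split the integrand difference as
\[
E(y)e^{-\rho_y^2/\sinh t}-E(y_0)e^{-\rho_{y_0}^2/\sinh t}=[E(y)-E(y_0)]\,e^{-\rho_y^2/\sinh t}+E(y_0)\bigl[e^{-\rho_y^2/\sinh t}-e^{-\rho_{y_0}^2/\sinh t}\bigr].
\]
For the first summand, applying the mean value theorem to $r\mapsto e^{-\tanh(t/2)r^2}$, whose derivative is uniformly $\le C\sqrt{t}$, and using $|\sqrt{\|y\|}-\sqrt{\|y_0\|}|\le d(y,y_0)$ gives $|E(y)-E(y_0)|\le C\sqrt{t}\,d(y,y_0)$; after the $V_k$-integration this contributes a term of order $d(y,y_0)\,t^{-(2\langle k\rangle+N-3/2)}\tau_y(e^{-c\|\cdot\|/t})(x)$, which is stronger than required.

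For the second summand, the elementary inequality $|e^{-a}-e^{-b}|\le|a-b|e^{-\min(a,b)}$ together with \lemref{qusitrian} (which gives $|\rho_y^2-\rho_{y_0}^2|\le(\rho_y+\rho_{y_0})d(y,y_0)$) yields
\[
|e^{-\rho_y^2/\sinh t}-e^{-\rho_{y_0}^2/\sinh t}|\le\frac{(\rho_y+\rho_{y_0})\,d(y,y_0)}{\sinh t}\,e^{-\min(\rho_y^2,\rho_{y_0}^2)/\sinh t}.
\]
I would split $e^{-\min(\rho_y^2,\rho_{y_0}^2)/\sinh t}$ into two equal halves, use one half together with the standard estimate $se^{-s^2/(2\sinh t)}\le C\sqrt{\sinh t}$ applied to whichever of $\rho_y,\rho_{y_0}$ realizes the minimum, and exploit the case hypothesis $d(y,y_0)<\sqrt{t}$ with $|\rho_y-\rho_{y_0}|\le d(y,y_0)$ to handle the other one; this absorbs the polynomial prefactor into $C/\sqrt{t}$. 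The remaining exponential half, bounded by $e^{-\rho_y^2/(2\sinh t)}+e^{-\rho_{y_0}^2/(2\sinh t)}$, is carried through $V_k\int_{-1}^1(1-u^2)^{\nu}du$ to produce exactly $\tau_y(e^{-c\|\cdot\|/t})(x)+\tau_{y_0}(e^{-c\|\cdot\|/t})(x)$ for an appropriate $c>0$ (using $\sinh t\sim t$ on $0<t<1$). The main obstacle is precisely this last balancing step: the factor $(\rho_y+\rho_{y_0})/\sinh t$ is naively too large, and only the smallness $d(y,y_0)<\sqrt{t}$ allows one to compare $\rho_{y_0}$ with $\rho_y$ up to an additive $\sqrt{t}$ error so that the polynomial factor is paired with only half of the Gaussian exponential while enough decay remains to reconstitute the $\tau_y$ and $\tau_{y_0}$ on the right-hand side.
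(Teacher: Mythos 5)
Your argument is correct, and its skeleton coincides with the paper's: the same two-term telescoping of $E(y)\,e^{-\rho_y^2/\sinh t}$ into a prefactor difference plus a translation-kernel difference, the same $O(\sqrt t\,d(y,y_0))$ bound on the prefactor term via $|\sqrt{\|y\|}-\sqrt{\|y_0\|}|\le d(y,y_0)$, and the same reliance on \lemref{qusitrian} to convert $|\rho_y-\rho_{y_0}|$ into $d(y,y_0)$ pointwise in $(\eta,u)$. Where you diverge is the central Gaussian estimate. The paper applies the mean value theorem to the map $\rho\mapsto e^{-\rho^2/\sinh t}$ in the square-root variable itself: its derivative $\tfrac{2\rho}{\sinh t}e^{-\rho^2/\sinh t}$ is bounded on the interval between $\rho_{y_0}$ and $\rho_y$ by $\tfrac{C}{\sqrt{\sinh t}}\,e^{-\rho_{\min}^2/(2\sinh t)}$, which delivers the $t^{-1/2}$ gain and the Gaussian at $\min(\rho_y,\rho_{y_0})$ in one stroke, uniformly in $y,y_0$, with no case distinction. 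You instead difference in the variable $\rho^2$ via $|e^{-a}-e^{-b}|\le|a-b|\,e^{-\min(a,b)}$, which leaves the spurious polynomial factor $\rho_y+\rho_{y_0}$; absorbing it is what forces your dichotomy $d(y,y_0)\gtrless\sqrt t$, since only when $d(y,y_0)<\sqrt t$ can you write $\rho_y+\rho_{y_0}\le 2\min(\rho_y,\rho_{y_0})+\sqrt t$ and pair the polynomial with half of the exponential while keeping the other half to reconstitute $\tau_y+\tau_{y_0}$ through \eqref{radial}. Both routes close (your easy case $d(y,y_0)\ge\sqrt t$ via \eqref{Lambda1} is fine as stated); the paper's choice of differencing variable simply renders the two-case structure unnecessary.
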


\begin{proof}
From \eqref{Lambda0}, we write
\begin{align*}&\;\;\;\;\;\frac{\Lambda_{k,1}\left(x,y;t\right)-\Lambda_{k,1}\left(x,y_0;t\right)}{d\left(y,y_0\right)}\\&=\frac1{(\sinh\;t)^{2\langle k\rangle+N-1}}\Bigg(\frac{e^{-\tanh\frac t2\left(\left\|x\right\|+\left\|y\right\|\right)}\tau_x\left(e^{-\frac1{\sinh\;t}\left\|\cdot\right\|}\right)(y)-e^{-\tanh\frac t2\left(\left\|x\right\|+\left\|y\right\|\right)}\tau_x\left(e^{-\frac1{\sinh\;t}\left\|\cdot\right\|}\right)(y_0)}{d\left(y,y_0\right)}\\&\;\;\;\;+\frac{e^{-\tanh\frac t2\left(\left\|x\right\|+\left\|y\right\|\right)}\tau_x\left(e^{-\frac1{\sinh\;t}\left\|\cdot\right\|}\right)(y_0)-e^{-\tanh\frac t2\left(\left\|x\right\|+\left\|y_0\right\|\right)}\tau_x\left(e^{-\frac1{\sinh\;t}\left\|\cdot\right\|}\right)(y_0)}{d\left(y,y_0\right)}\Bigg)\\&=\frac1{(\sinh\;t)^{2\langle k\rangle+N-1}}\left(I_1+I_2\right).\end{align*}

Notice that  $\sinh\;t$ behaves like $t$ for $0<t\leq 1$. For the
second part $I_2$. If $\left\|y\right\|=\left\|y_0\right\|$, then
$I_2=0$. If $\left\|y\right\|\neq\left\|y_0\right\|$, then from
the inequality
$$\left|\frac{e^{-\tanh\frac t2\cdot x_1^2}-e^{-\tanh\frac t2\cdot x_2^2}}{x_1-x_2}\right|\leq\underset x{max}\left|2\tanh\frac t2\cdot xe^{-\tanh\frac t2\cdot x^2}\right|\leq C_2\sqrt t,$$
we have
\begin{align*}\left|I_2\right|&=\left|\tau_x\left(e^{-\frac1{\sinh\;t}\left\|\cdot\right\|}\right)(y_0)\frac{e^{-\tanh\frac t2\left(\left\|x\right\|+\left\|y\right\|\right)}-e^{-\tanh\frac t2\left(\left\|x\right\|+\left\|y_0\right\|\right)}}{d\left(y,y_0\right)}\right|\\&\leq\tau_x\left(e^{-\frac1{\sinh\;t}\left\|\cdot\right\|}\right)(y_0)\cdot\left|\frac{e^{-\tanh\frac t2\left\|y\right\|}-e^{-\tanh\frac t2\left\|y_0\right\|}}{\sqrt{\left\|y\right\|}-\sqrt{\left\|y_0\right\|}}\right|\\&\leq C_2\sqrt t\tau_x\left(e^{-\frac{b_2}t\left\|\cdot\right\|}\right)(y_0).\end{align*}

For the first part $I_1$, from the inequality
\begin{align*}\left|\frac{e^{-\frac1{\sinh\;t}\cdot x_1^2}-e^{-\frac1{\sinh\;t}\cdot x_2^2}}{x_1-x_2}\right|&\leq\underset{x_2\leq x\leq x_1}{max}\left|\frac2{\sinh\;t}\cdot xe^{-\frac1{\sinh\;t}\cdot x^2}\right|\\&\leq\frac2{\sqrt{\sinh\;t}}e^{-\frac1{2\sinh\;t}\cdot x_2^2}\underset{x_2\leq x\leq x_1}{max}\left|\frac1{\sqrt{\sinh\;t}}\cdot xe^{-\frac1{2\sinh\;t}\cdot x^2}\right|\\&\leq C_1\frac1{\sqrt t}e^{-\frac{b_1}{t}\cdot x_2^2},\;\;\;x_1>x_2,\end{align*}
along with Lemma \ref{qusitrian} and \eqref{radial},
\begin{align*}\left|I_1\right|&\leq C_1e^{-\tanh\frac t2\left(\left\|x\right\|+\left\|y\right\|\right)}V_k\Bigg(\int_{-1}^1\left|\frac{e^{-\frac1{\sinh\;t}\left(\left\|x\right\|+\left\|y\right\|-\sqrt{2\left(\left\|x\right\|\left\|y\right\|+\left\langle\cdot,y\right\rangle\right)}u\right)}-e^{-\frac1{\sinh\;t}\left(\left\|x\right\|+\left\|y_0\right\|-\sqrt{2\left(\left\|x\right\|\left\|y_0\right\|+\left\langle\cdot,y_0\right\rangle\right)}u\right)}}{d\left(y,y_0\right)}\right|\\&\;\;\;\;\cdot\left(1-u^2\right)^{\frac N2+\left\langle k\right\rangle-2}du\Bigg)\left(x\right)\\&\leq\frac{C_1}{\sqrt t}e^{-\tanh\frac t2\left(\left\|x\right\|+\left\|y\right\|\right)}\int_{\mathbb{R}^N}\Bigg(\int_{\left\{u\in\lbrack-1,1\rbrack:\left\|y\right\|-\sqrt{2\left(\left\|x\right\|\left\|y\right\|+\left\langle\eta,y\right\rangle\right)}u>\left\|y_0\right\|-\sqrt{2\left(\left\|x\right\|\left\|y_0\right\|+\left\langle\eta,y_0\right\rangle\right)}u\right\}}\\&\;\;\;\;\; e^{-\frac1{2\sinh\;t}\left(\left\|x\right\|+\left\|y_0\right\|-\sqrt{2\left(\left\|x\right\|\left\|y_0\right\|+\left\langle\eta,y_0\right\rangle\right)}u\right)}\left(1-u^2\right)^{\frac N2+\left\langle k\right\rangle-2}du\\&\;\;\;\;+\int_{\left\{u\in\lbrack-1,1\rbrack:\left\|y\right\|-\sqrt{2\left(\left\|x\right\|\left\|y\right\|+\left\langle\eta,y\right\rangle\right)}u<\left\|y_0\right\|-\sqrt{2\left(\left\|x\right\|\left\|y_0\right\|+\left\langle\eta,y_0\right\rangle\right)}u\right\}}e^{-\frac1{2\sinh\;t}\left(\left\|x\right\|+\left\|y\right\|-\sqrt{2\left(\left\|x\right\|\left\|y\right\|+\left\langle\eta,y\right\rangle\right)}u\right)}\\&\;\;\;\;\cdot\left(1-u^2\right)^{\frac N2+\left\langle k\right\rangle-2}du\Bigg)d\mu_x(\eta)\\&\leq\frac{C_1}{\sqrt t}\left(\tau_{y_0}\left(e^{-\frac{b_1}t\left\|\cdot\right\|}\right)(x)+\tau_y\left(e^{-\frac{b_1}t\left\|\cdot\right\|}\right)(x)\right).\end{align*}

Thus
\begin{align*}\left|\frac{\Lambda_{k,1}\left(x,y;t\right)-\Lambda_{k,1}\left(x,y_0;t\right)}{d\left(y,y_0\right)}\right|&\leq\frac1{t^{2\langle k\rangle+N-1}}\left(C_2\sqrt t+\frac{C_1}{\sqrt t}\right)\left(\tau_{y_0}\left(e^{-\frac ct\left\|\cdot\right\|}\right)(x)+\tau_y\left(e^{-\frac ct\left\|\cdot\right\|}\right)(x)\right)\\&\leq\frac C{t^{2\langle k\rangle+N-\frac12}}\left(\tau_{y_0}\left(e^{-\frac ct\left\|\cdot\right\|}\right)(x)+\tau_y\left(e^{-\frac ct\left\|\cdot\right\|}\right)(x)\right).\tag*{\qedhere}\end{align*}
\end{proof}

\noindent{\it Proof of Theorem \ref{main}}. We only need to show
that the operator $\left(-\triangle_{k,1}\right)^{-i\sigma}$ is
$L^p$-bounded for $1<p\leq2$ and weakly $L^1$-bounded since it is
symmetric on
$L^2\left(\mathbb{R}^N,\vartheta_{k,1}\left(x\right)dx\right)$ and
its $L^p$-boundedness  for $2<p<\infty$ can be derived from the
duality argument. From \eqref{Kj}, we write
\begin{align*}K\left(x,y\right)&=\int_0^1\Lambda_{k,1}\left(x,y;t\right)t^{i\sigma-1}dt+\int_1^\infty\Lambda_{k,1}\left(x,y;t\right)t^{i\sigma-1}dt\\&=K^{(1)}\left(x,y\right)+K^{(2)}\left(x,y\right),\end{align*}
where $x,y\in \mathbb R^N$, $y\not\in G.x$. We claim that
$K\left(x,y\right)$ satisfies the condition in Theorem \ref{Hor}.

For the second part $K^{(2)}\left(x,y\right)$, by \eqref{Lambda1},
\eqref{prop1} and \eqref{inttauye},
\begin{align*}\int_{\mathbb{R}^N}\left|K^{(2)}\left(x,y\right)\right|\vartheta_{k,1}\left(x\right)dx&\leq C\int_{\mathbb{R}^N}\int_1^\infty e^{-\left({2\langle k\rangle+N-1}\right)t}\tau_y\left(e^{-b\left\|\cdot\right\|}\right)(x)\frac1t\vartheta_{k,1}\left(x\right)dtdx\\&= C\int_1^\infty\int_{\mathbb{R}^N}e^{-\left({2\langle k\rangle+N-1}\right)t}e^{-b\left\|x\right\|}\frac1t\vartheta_{k,1}\left(x\right)dx\\&\leq C\int_1^\infty e^{-\left({2\langle k\rangle+N-1}\right)t}\frac1tdt\leq C.\end{align*}
Then we have
$$\int_{d_G(x,y)>2d(y,y_0)}\left|K^{(2)}\left(x,y\right)-K^{(2)}(x,\;y_0)\right|\vartheta_{k,1}\left(x\right)dx\leq2\int_{\mathbb{R}^N}\left|K^{(2)}\left(x,y\right)\right|\vartheta_{k,1}\left(x\right)dx\leq C.$$

For the first part $K^{(1)}\left(x,y\right)$, from Lemma
\ref{Lambda},
\begin{align*}\left|K^{(1)}\left(x,y\right)-K^{(1)}\left(x,y_0\right)\right|&\leq\int_0^1\left|\Lambda_{k,1}\left(x,y;t\right)-\Lambda_{k,1}\left(x,y_0;t\right)\right|\frac1tdt\\&\leq Cd\left(y,y_0\right)\int_0^1\frac1{t^{2\langle k\rangle+N+\frac12}}\left(\tau_{y_0}\left(e^{-\frac ct\left\|\cdot\right\|}\right)(x)+\tau_y\left(e^{-\frac ct\left\|\cdot\right\|}\right)(x)\right)dt.\end{align*}
When $d_G(x,y)>2d(y,y_0)$, we have
$$d_G(x,y_0)\geq d_G(x,y)-d(y_0,y)>d(y,y_0),\;\;\;d_G(x,y)>d(y,y_0).$$
Then from \eqref{min}, for any $u\in [-1,1]$ and $\eta\in
co(G.x)$, we have
$$\sqrt{\left\|x\right\|+\left\|y_0\right\|-\sqrt{2\left(\left\|x\right\|\left\|y_0\right\|+\left\langle\eta,y_0\right\rangle\right)}u}\geq d_G(x,y_0)>d(y,y_0),$$
$$\sqrt{\left\|x\right\|+\left\|y\right\|-\sqrt{2\left(\left\|x\right\|\left\|y\right\|+\left\langle\eta,y\right\rangle\right)}u}\geq d_G(x,y)>d(y,y_0).$$
So
$$\tau_x\left(e^{-\frac ct\left\|\cdot\right\|}\right)(y_0)\leq\tau_x\left(e^{-\frac c{4t}\left(\sqrt{\left\|\cdot\right\|}+d(y,y_0)\right)^2}\right)(y_0),\;\;\;\tau_x\left(e^{-\frac ct\left\|\cdot\right\|}\right)(y)\leq\tau_x\left(e^{-\frac c{4t}\left(\sqrt{\left\|\cdot\right\|}+d(y,y_0)\right)^2}\right)(y).$$

Therefore, from \eqref{prop1} and \eqref{inttauye},
\begin{align*}&\int_{d_G(x,y)>2d(y,y_0)}\left|K^{(1)}\left(x,y\right)-K^{(1)}(x,\;y_0)\right|\vartheta_{k,1}\left(x\right)dx\\&\leq Cd(y,y_0)\int_0^1\frac1{t^{{2\langle k\rangle+N+\frac12}}}\bigg(\int_{\mathbb{R}^N}\tau_{y_0}\left(e^{-\frac c{4t}\left(\sqrt{\left\|\cdot\right\|}+d(y,y_0)\right)^2}\right)(x)\vartheta_{k,1}\left(x\right)dx\\&\;\;\;\;+\int_{\mathbb{R}^N}\tau_y\left(e^{-\frac c{4t}\left(\sqrt{\left\|\cdot\right\|}+d(y,y_0)\right)^2}\bigg)(x)\vartheta_{k,1}\left(x\right)dx\right)dt\\&=Cd(y,y_0)\int_0^1\frac1{t^{2\langle k\rangle+N+\frac12}}dt\int_{\mathbb{R}^N}2e^{-\frac c{4t}\left(\sqrt{\left\|x\right\|}+d(y,y_0)\right)^2}\vartheta_{k,1}\left(x\right)dx\\&\leq Cd(y,y_0)\int_0^\infty r^{2\left\langle k\right\rangle+N-2}dr\int_0^1\frac2{t^{2\langle k\rangle+N+\frac12}}e^{-\frac c{4t}\left(\sqrt r+d(y,y_0)\right)^2}dt\\&\leq Cd(y,y_0)\int_0^\infty\frac{r^{2\left\langle k\right\rangle+N-2}}{\left(\sqrt r+d(y,y_0)\right)^{2\left(2\langle k\rangle+N-\frac12\right)}}dr\int_0^\infty\frac2{u^{2\langle k\rangle+N+\frac12}}e^{-\frac c{4u}}du\\&\leq Cd(y,y_0)\int_0^\infty\frac1{\left(\sqrt r+d(y,y_0)\right)^3}dr=C.\end{align*}
The proof of Theorem \ref{main} is complete. $\hfill\Box$

\section*{Acknowledgments}The author would like to thank his adviser Nobukazu Shimeno for valuable comments and advice.
All data included in this study are available upon request by contact with the corresponding author.


\begin{thebibliography}{99}

\bibitem{A} B. Amri, Riesz Transforms for Dunkl Hermite Expansions, Journal of Mathematical Analysis and Applications, vol. 423, no. 1 (2015), 646--659.

\bibitem{AS} B. Amri, M. Sifi, Riesz transforms for the Dunkl transform, Ann. Math.Blaise Pascal 19(2012), no. 1, 247--262.

\bibitem{BD} S. Ben Sa\"id, L. Deleaval, Translation Operator and Maximal Function for the $(k,1)$-Generalized Fourier Transform, Journal of Functional Analysis, vol. 279, no. 8 (2020), 108706.


\bibitem{BD2} S. Ben Sa\"id, L. Deleaval, A Hardy--Littlewood Maximal Operator for the Generalized Fourier Transform on $\mathbb R$, J Geom Anal 30, 2273--2289 (2020).


\bibitem{BSK2} S. Ben Sa\"id, T. Kobayashi and B. \O rsted, Laguerre semigroup and Dunkl operators, Compos. Math., 148(4)(2012), 1265--1336.


\bibitem{Ch} J. Cheeger, Differentiability of Lipschitz functions on metric measure spaces, Geom. Funct.
Anal. 9 (1999), 42--517.

\bibitem{CD} D. Constales, H. De Bie and P. Lian, Explicit formulas for the Dunkl dihedral kernel
and the $(k,a)$-generalized Fourier kernel, J. Math. Anal. Appl. 460(2) (2018) 900--926


\bibitem{CG} R. R. Coifman, G. Weiss, Analyse Hamonique Non-Commutative sur Certains Espaces Homogenes, Springer Berlin Heidelberg, 1971.


\bibitem{Du1} C. F. Dunkl, Differential-difference operators associated to reflection groups, Trans. Amer. Math. Soc., 311, no. 1(1989), 167--183.


\bibitem{Du2} C. F. Dunkl, Integral kernels with reflection group invariance, Canadian J Math43(1991), 1213--1227.

\bibitem {Ga} L. Gallardo, C. Rejeb, Support properties of the intertwining and the mean value operators in Dunkl theory. Proceedings of the American Mathematical
Society 146.1(2017), 1.

\bibitem{GI2} D. Gorbachev, V. Ivanov, S. Tikhonov, Pitt's inequalities and uncertainty principle for generalized Fourier transform, International Mathematics Research Notices,  Issue 23(2016), 7179--7200.

\bibitem{H} R. Howe, The oscillator semigroup. The mathematical heritage of Hermann Weyl (Durham, NC, 1987), 61--132, Proc. Sympos. Pure Math., 48, Amer. Math. Soc., Providence, RI, 1988.

\bibitem{KM1} T. Kobayashi, G. Mano, The inversion formula and holomorphic extension of the minimal representation of the conformal group, Harmonic Analysis, Group Representations, Automorphic Forms and
Invariant Theory: In honor of Roger Howe, (eds. J. S. Li, E. C.
Tan, N. Wallach and C. B. Zhu), World Scientific(2007), 159--223.

\bibitem{KM2} T. Kobayashi, G. Mano, The Schr\"odinger model for the minimal representation of the indefinite
orthogonal group $O(p,q)$, Mem. Amer. Math. Soc. 213(1000), 2011.


\bibitem{NS1} A. Nowak, K. Stempak, Riesz Transforms for the Dunkl Harmonic Oscillator, Mathematische Zeitschrift 262 (3)(2009), 539--556.

\bibitem{NS2} A. Nowak, K. Stempak, Imaginary Powers of the Dunkl Harmonic Oscillator, Symmetry Integrability and Geometry--Methods and Applications, vol. 5 (2009) p. 16.


\bibitem{R1} M. R\"osler, Generalized Hermite polynomials and the heat equation for Dunkl operators, Comm. Math.
Phys., 192(3)(1998), 519--542.

\bibitem{R2} M. R\"osler, Positivity of Dunkl's intertwining operator. Duke Mathematical Journal, vol. 98, no. 3 (1999) pp. 445--463.

\bibitem{ST} K. Stempak, J. L. Torrea. Higher Riesz Transforms and Imaginary Powers Associated to the Harmonic Oscillator, Acta Mathematica Hungarica, vol. 111, no. 1 (2006), 43--64.


\bibitem{Te1} W. Teng,  Dunkl translations,
Dunkl--type BMO space and Riesz transforms for the Dunkl
transform on $L^\infty$, Funct Anal Its Appl 55, 304--315 (2021).

\bibitem{Te} W. Teng, Hardy Inequalities for Fractional $(k,a)$-Generalized Harmonic Oscillator. ArXiv Preprint ArXiv:2008.00804, 2020.


\bibitem{Wa} G. N. Watson, A Treatise on the Theory of Bessel Functions, Cambridge University Press, Cambridge,
1922.

\bibitem{Wo}
J. S.W. Wong, Remarks on Metric Spaces, Indagationes Mathematicae
(Proceedings), Volume 69 (1966), 70--73.


\end{thebibliography}
\end{document}